\renewcommand{\mathcal}{\mathscr}
\definecolor{citation}{rgb}{0.2,0.5,0.2}
\definecolor{formula}{rgb}{0.1,0.2,0.5}
\definecolor{url}{rgb}{0,0.2,0.7}
\newtheorem{theorem}{Theorem}[section]
\newtheorem{corollary}[theorem]{Corollary}
\newtheorem{lemma}[theorem]{Lemma}
\newtheorem{prop}[theorem]{Proposition}
\theoremstyle{definition}
\newtheorem{defn}[theorem]{Definition}
\newtheorem{rem}[theorem]{Remark}
\numberwithin{equation}{section}
\newcommand{\R}{{\mathds R}}
\newlength{\defbaselineskip}
\title
[Geometric inequalities and symmetry results for elliptic systems]
{Geometric inequalities and symmetry \\
results for elliptic systems}
\author[Serena Dipierro]{}
\subjclass{35J92, 35J93, 35J50.}
\keywords{Elliptic systems, monotone solutions, stable solutions, phase separation, Poincar\'{e}-type inequality.}
\email{dipierro@sissa.it}
\thanks{The author has been supported by FIRB ``Project Analysis and Beyond."}
\begin{document}
\maketitle

\centerline{\scshape Serena Dipierro }
\medskip
{\footnotesize
 \centerline{SISSA - International School for Advanced Studies}
   \centerline{Sector of Mathematical Analysis}
   \centerline{Via Bonomea, 265}
   \centerline{34136 Trieste, Italy}
}
\bigskip

% The name of the associate editor will be entered by an editorial staff
% "Communicated by the associate editor name" is not needed for special issue.
\centerline{(Communicated by Alessio Figalli)}

\begin{abstract}
We obtain some Poincar\'{e} type formulas, that we use, together with the level set analysis,
to detect the one-dimensional symmetry of monotone and stable solutions of
possibly degenerate elliptic systems of the form
\begin{eqnarray*}
\left\{
\begin{array}{ll}
    div\left( a\left( |\nabla u|\right) \nabla u\right)   = F_1(u, v),         \\
    div\left( b\left( |\nabla v|\right) \nabla v\right) = F_2(u, v),
 \end{array}
\right.
\end{eqnarray*}
where~$F\in C^{1,1}_{loc}(\R^2)$.

Our setting is very general, and it comprises, as a particular case,
a conjecture of De Giorgi for phase separations in~$\R^2$.
\end{abstract}

\section{Introduction}
In this paper we consider a class of quasilinear (possibly degenerate) elliptic systems in $\R^n$.
We prove that, under suitable assumptions, the solutions have one-dimensional symmetry,
showing that the results obtained in \cite{BLWZ, BSWW, FG} hold in a more general setting.

In \cite{BLWZ} the following problem has been studied:
\begin{eqnarray}\label{syst}
\left\{
\begin{array}{ll}
    \Delta u   = uv^2,         \\
    \Delta v = vu^2, \\
    u, v>0.
 \end{array}
\right.
\end{eqnarray}
The authors proved the existence, symmetry and nondegeneracy of the solution to problem \eqref{syst}
in $\R$; in particular, they showed that entire solutions are reflectionally symmetric,
namely that there exists $x_0$ such that $u(x-x_0)=v(x-x_0)$.
Moreover, they estabilished a result that may be considered the analogue of a famous
conjecture of De Giorgi for problem \eqref{syst} in dimension $2$,
that is they proved that monotone solutions of \eqref{syst} in $\R^2$ have one-dimensional symmetry
under the additional growth condition
\begin{equation}\label{growth}
  u(x)+v(x) \leq C(1+|x|).
\end{equation}
On the other hand, in \cite{NTTV}, it has been proved that
the linear growth is the lowest possible for solutions to \eqref{syst};
in other words, if there exists $\alpha\in (0,1)$ such that
$$
   u(x)+v(x) \leq C(1+|x|)^{\alpha},
$$
then $u=v\equiv 0$.

In \cite{BSWW} the authors proved that the above mentioned one-dimensional symmetry still holds
in $\R^2$ when the monotonicity condition is replaced by the stability of the solutions
(which is a weaker assumption).
Moreover, they showed that there exist solutions to \eqref{syst}
which do not satisfy the growth condition \eqref{growth},
by constructing solutions with polynomial growth.

Moreover, we mention the paper \cite{Wa},
where the author proved that, for any $n\geq 2$,
a solution to \eqref{syst} which is a local minimizer and satisfies the growth condition \eqref{growth}
has one-dimensional symmetry.

%\medskip

In this paper we consider a more general setting,
that is we take $F\in C^{1,1}_{loc}\left(\R^2\right)$,
and we study the following elliptic system in~$\R^n$
\begin{eqnarray}\label{system}
\left\{
\begin{array}{ll}
    div\left( a\left( |\nabla u|\right) \nabla u\right)   = F_1(u, v),         \\
    div\left( b\left( |\nabla v|\right) \nabla v\right) = F_2(u, v),
 \end{array}
\right.
\end{eqnarray}
where $F_1$ and $F_2$ denote the derivatives of $F$ with respect to the first and the second variable respectively.

We suppose that $a, b\in C^1\left( \left( 0, +\infty\right) \right)$ satisfy
the following conditions:
\begin{equation}\label{ab1}
a(t)> 0, \quad b(t)> 0 \quad \mathrm{for\ any\ } t\in (0, +\infty),
\end{equation}
\begin{equation}\label{ab2}
a(t) + a'(t)t> 0, \quad b(t)+b'(t)t>0 \quad \mathrm{for\ any\ } t\in (0, +\infty).
\end{equation}

We define $A, B : \R^n \setminus \left\lbrace 0\right\rbrace \rightarrow Mat\left(n\times n\right)$
by setting, for any $1\leq h, k \leq n$,
$$
A_{hk}(\xi) := \frac{a'\left(|\xi |\right)}{|\xi |} \, \xi_h \, \xi_k + a(|\xi |) \delta_{hk},
$$
and
$$
B_{hk} (\xi) := \frac{b'\left(|\xi |\right)}{|\xi |} \, \xi_h \, \xi_k + b(|\xi |) \delta_{hk}.
$$

Now, for any $t>0$, we introduce the following notation:
\begin{equation}\label{lambda}
\lambda_1 (t) := a(t) + a'(t)t, \qquad     \lambda_2 (t) = \ldots = \lambda_n (t) := a(t),
\end{equation}
\begin{equation}\label{gamma}
\gamma_1 (t) := b(t) + b'(t)t, \qquad    \gamma_2 (t) = \ldots = \gamma_n (t) := b(t),
\end{equation}
and we define
$$
\Lambda_i (t) := \int_0^t \lambda_i \left( |s|\right) s\, ds,   \qquad
\Gamma_i (t) := \int_0^t \gamma_i \left( |s|\right) s\, ds
$$
for $i=1,2$ and $t\in\R$.

We will require that $a$ satisfies (A1) or (A2), where:
\begin{itemize}
\item[(A1)] $\left\lbrace \nabla u =0\right\rbrace =\varnothing$ and
$$
t^2 \lambda_1 (t) \in L^{\infty}_{loc}\left( \left[ 0, +\infty\right) \right).
$$
\item[(A2)] We have that
$$
a\in C\left( \left[ 0, +\infty\right) \right)
$$
and
$$
\mathrm{the\ map\ } t \mapsto ta(t) \mathrm{\ belongs\ to\ } C^1 \left( \left[ 0, +\infty\right) \right).
$$
\end{itemize}
Moreover, we require the same properties for $b$:
\begin{itemize}
\item[(B1)] $\left\lbrace \nabla v =0\right\rbrace =\varnothing$ and
$$
t^2 \gamma_1 (t) \in L^{\infty}_{loc}\left( \left[ 0, +\infty\right) \right).
$$
\item[(B2)] We have that
$$
b\in C\left( \left[ 0, +\infty\right) \right)
$$
and
$$
\mathrm{the\ map\ } t \mapsto tb(t) \mathrm{\ belongs\ to\ } C^1 \left( \left[ 0, +\infty\right) \right).
$$
\end{itemize}
In case (A2) and (B2) hold, we define
$A_{hk}(0) := a(0) \delta_{hk}$ and $B_{hk}(0) := b(0) \delta_{hk}$.

These assumptions may look rather technical at a first glance,
but they are the standard conditions that comprise as particular cases
the classical elliptic degenerate and nonlinear operators,
such as the $p$-Laplacian and the mean curvature operator.

In order to state our main result, we give the definition of monotone and stable solution.
\begin{defn}\label{monsol}
We say that a solution~$(u, v)$ of~\eqref{system} satisfies
a \emph{monotonicity condition} if
\begin{equation}\label{monotonicity}
 u_n >0, \qquad     v_n<0.
\end{equation}
\end{defn}

\begin{defn}\label{stablesol}
When $F\in C^2_{loc}(\R^2)$ we say that a solution~$(u, v)$ of~\eqref{system}
is \emph{stable} if the linearization is weakly positive definite,
that is, for any~$\phi, \psi\in C^{\infty}_0(\R^n)$,
\begin{equation}\begin{split}\label{stable}
& \int_{\R^n} \left( A\left( \nabla u(x)\right) \nabla \phi (x)\right) \cdot \nabla\phi (x)  +
   \left( B\left( \nabla v(x)\right) \nabla\psi(x)\right) \cdot \nabla\psi (x)  \\
&\qquad +    F_{11}(u,v) \phi^2(x) + F_{22}(u,v) \psi^2(x) + 2 F_{12}(u,v) \phi(x)\psi (x) \, dx \geq 0. \end{split}
\end{equation}
\end{defn}

In our general framework, since $F_1$ and $F_2$ may not be everywhere differentiable,
the integral in \eqref{stable} may not be well defined.
Therefore it is convenient to introduce the sets
$$
  \mathcal{D} :=  \left\lbrace (t,s)\in\R^2 : F_{11}(t,s),\, F_{12}(t,s),\,
 F_{22}(t,s) \mathrm{\ exist} \right\rbrace ,
$$
and
$$
  \mathcal{N} := \R^2 \setminus\mathcal{D}.
$$
It is known that
\begin{equation}\label{borel}
\mathrm{the\ set\ } \mathcal{N} \mathrm{\ is\ Borel\ and\ with\ zero\ Lebesgue\ measure\ }
\end{equation}
(see pages 81--82 in \cite{EG}).
Moreover, we consider the sets
\begin{eqnarray*}&&
  \mathcal{N}_{uv} := \left\lbrace x\in\R^n : (u(x), v(x))\in\mathcal{N} \right\rbrace,
\\{\mbox{and
}}&&
  \mathcal{D}_{uv} := \R^n \setminus\mathcal{N}_{uv}.
\end{eqnarray*}
So we say that $(u,v)$ is a stable solution to \eqref{system},
if for any $\phi, \psi\in C^{\infty}_0(\R^n)$,
\begin{equation}\begin{split}\label{stable1}
 & \int_{\R^n} \left( A\left( \nabla u(x)\right) \nabla \phi (x)\right) \cdot \nabla\phi (x)  +
   \left( B\left( \nabla v(x)\right) \nabla\psi(x)\right) \cdot \nabla\psi (x)\, dx  \\
  &\qquad +\int_{\mathcal{D}_{uv}}  F_{11}(u,v) \phi^2(x) + F_{22}(u,v) \psi^2(x) +
2F_{12}(u,v) \phi(x)\psi (x) \, dx \geq 0.
\end{split}\end{equation}
Of course, \eqref{stable1} reduces to \eqref{stable} when $F$ is in $C^2_{loc}(\R^2)$.

Then, we state our symmetry result. For this, we denote by~$\Im (u,v)$ the 
image of the map~$(u,v):\R^n\rightarrow\R^2$, i.e. 
$\Im (u,v):=\left\lbrace (u(x),v(x)), x\in\R^n\right\rbrace$.  
\begin{theorem}\label{T1D}
Let~$(u, v)$ be a solution of~\eqref{system}. Suppose that~$u\in
C^1(\R^n)\cap
C^2(\left\lbrace \nabla u\neq 0\right\rbrace)$,~$v\in C^1(\R^n)\cap C^2(\left\lbrace \nabla v\neq 0\right\rbrace)$, and~$\nabla u, \nabla v\in W^{1,2}_{loc}(\R^n)$.

Suppose that either (A2) holds or that~$\left\lbrace \nabla u=0\right\rbrace =\varnothing$, and that either (B2) holds or that~$\left\lbrace \nabla v=0\right\rbrace=\varnothing$.

Assume that either
\begin{equation}\label{mon}
\mbox{the\ monotonicity\ condition\ \eqref{monotonicity}\ holds,\ and\ $F_{12}(u,v)\geq 0$\ in\ $\Im(u,v)$}  , \end{equation}
or
\begin{equation}\label{stab}
  \mbox{$(u,v)$\ is\ stable,\ and\ $F_{12}(u,v)\leq 0$\ in\ $\Im(u,v)$}  . \end{equation}
If
\begin{equation}\label{EE}
\liminf_{R\rightarrow+\infty}\frac1{\log^2 R}
\int_{B_R\setminus B_{\sqrt R}}
\frac{|A(\nabla u(x))|\, |\nabla u(x)|^2 +|B(\nabla v(x))|\, |\nabla v(x)|^2}{|x|^2}\, dx =0,
\end{equation}
then~$(u, v)$ has one-dimensional symmetry, in the sense that there exist~$\overline{u}, \overline{v}:\R\rightarrow\R$ and~$\omega_{u}, \omega_{v}\in S^{n-1}$ in such a way that~
$(u(x), v(x))=(\overline{u}(\omega_{u}\cdot x), \overline{v}(\omega_{v}\cdot x))$, for any~$x\in\R^n$.

Moreover, if we assume in addition that either
\begin{equation}\begin{split}\label{monF12}
&\mbox{the\ monotonicity\ condition\ \eqref{monotonicity}\ holds,\ and\ there\ exists\ a\ non-empty}\\
&\mbox{open\ set\ $\Omega'\subseteq\R^n$\ such\ that\ $F_{12}(u(x),v(x))>0$\ for\ any\ $x\in\Omega'$}  , 
\end{split}\end{equation}
or
\begin{equation}\begin{split}\label{stabF12}
&\mbox{$(u,v)$\ is\ stable,\ and\ there\ exist\ two\ open\ intervals\ $I_u,I_v\subseteq\R$}\\
&\mbox{such\ that\ $\left(I_u\times I_v\right)\cap\Im(u,v)\neq\varnothing$\ and\ $F_{12}(\overline u,\overline v)>0$\ for\ any\ $(\overline u,\overline v)\in I_u\times I_v$,}
\end{split}\end{equation}
then~$(u, v)$ has one-dimensional symmetry, and $\omega_{u}=\omega_{v}$.
\end{theorem}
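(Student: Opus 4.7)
I propose to follow the Sternberg--Zumbrun/Farina--Sciunzi--Valdinoci scheme adapted to the coupled system: reduce both hypotheses to the stability inequality~\eqref{stable1}, combine with the Poincar\'e-type formulas of the preceding sections to obtain a geometric inequality whose left-hand side is pointwise nonnegative, and apply a logarithmic cutoff to exploit~\eqref{EE}. In case~\eqref{stab} the stability inequality is assumed. In case~\eqref{mon}, I would differentiate~\eqref{system} in the direction $e_n$ so that $(u_n,v_n)$ solves the linearized system, then apply the Picone inequality with $w=u_n>0$ and with $w=-v_n>0$; together with $F_{12}(u,v)\ge 0$ and the AM--GM estimate $(-v_n/u_n)\phi^2+(u_n/(-v_n))\psi^2\ge 2|\phi\psi|$, this yields
\begin{equation*}
 \int_{\R^n}\!F_{11}\phi^2 + F_{22}\psi^2 - 2F_{12}|\phi\psi|\,dx \;\le\; \int_{\R^n}\!(A(\nabla u)\nabla\phi)\cdot\nabla\phi + (B(\nabla v)\nabla\psi)\cdot\nabla\psi\,dx,
\end{equation*}
which coincides with~\eqref{stable1} as soon as $\phi$ and $\psi$ are taken of opposite signs.

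Second, I would test~\eqref{stable1} with $\phi=|\nabla u|\,\eta$ and $\psi=\pm|\nabla v|\,\eta$ (the sign chosen so that $2F_{12}\phi\psi$ has the favorable sign under the current hypothesis on $F_{12}$), expand the quadratic forms, and subtract the identity produced by multiplying the equation satisfied by $u_i$ (resp.\ $v_i$) by $u_i\eta^2$ (resp.\ $v_i\eta^2$) and summing over $i$. The cross terms in $\nabla\eta$ and the $F_{11},F_{22}$ contributions cancel; the leftover $F_{12}$ residual $F_{12}(|\nabla u||\nabla v|\pm\nabla u\cdot\nabla v)$ is nonnegative by Cauchy--Schwarz; and the Poincar\'e-type formulas of the preceding sections rewrite the remaining quadratic expression as a sum of squared tangential derivatives of $|\nabla u|$, $|\nabla v|$ and squared principal curvatures of their level sets. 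The outcome is schematically
\begin{equation*}
 \int_{\{\nabla u\neq 0\}}\!\eta^2 G_u\,dx + \int_{\{\nabla v\neq 0\}}\!\eta^2 G_v\,dx \;\le\; C\!\int_{\R^n}\!\bigl(|A(\nabla u)||\nabla u|^2 + |B(\nabla v)||\nabla v|^2\bigr)|\nabla\eta|^2\,dx
\end{equation*}
with $G_u,G_v\ge 0$. Inserting the logarithmic cutoff $\eta_R$ with $\eta_R\equiv 1$ on $B_{\sqrt R}$, $\eta_R\equiv 0$ outside $B_R$, and $|\nabla\eta_R|\le C/(|x|\log R)$ on the annulus bounds the right-hand side by $(\log R)^{-2}$ times the integral in~\eqref{EE}, hence it vanishes in the liminf. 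Thus $G_u\equiv 0$ on $\{\nabla u\neq 0\}$ and $G_v\equiv 0$ on $\{\nabla v\neq 0\}$; a standard level-set analysis, closed on the critical sets via (A1)/(A2) and (B1)/(B2), then gives $u(x)=\overline u(\omega_u\cdot x)$ and $v(x)=\overline v(\omega_v\cdot x)$.

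For the enhanced statement, I would argue by contradiction, assuming $\omega_u$ and $\omega_v$ linearly independent (the antipodal case $\omega_v=-\omega_u$ is excluded in~\eqref{monF12} by $u_n>0$, $v_n<0$, and is absorbable into a sign change of the profile in~\eqref{stabF12}). Choose $e\in S^{n-1}$ with $e\cdot\omega_u=0$ and $e\cdot\omega_v\neq 0$, so that $\partial_e u\equiv 0$ while $\partial_e v$ does not vanish on any open set. Differentiating the first equation of~\eqref{system} in direction $e$ yields
\begin{equation*}
 0 \;=\; F_{11}(u,v)\,\partial_e u + F_{12}(u,v)\,\partial_e v \;=\; F_{12}(u,v)\,\partial_e v,
\end{equation*}
so $F_{12}(u,v)=0$ on an open subset of $\R^n$. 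In case~\eqref{monF12} this contradicts the existence of $\Omega'$; in case~\eqref{stabF12}, the independence of $\omega_u,\omega_v$ forces $\Im(u,v)$ to contain the rectangle $\overline u(\R)\times\overline v(\R)$, whose nonempty intersection with $I_u\times I_v$ is realized on an open subset of $\R^n$ where $F_{12}(u,v)>0$, again a contradiction.

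The main obstacle I anticipate is the first step: carrying out the coupled Picone computation so that the cross term $F_{12}$ is absorbed with the correct sign via AM--GM, and then checking that the Poincar\'e-type formulas survive the coupling through $F_{12}$ to produce a pointwise nonnegative geometric integrand. Once this has been secured, the cutoff estimate against~\eqref{EE} and the level-set analysis at the end are essentially routine.
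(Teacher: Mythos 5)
Your plan for the first part (the one--dimensional symmetry with possibly distinct $\omega_u,\omega_v$) is sound and close to the paper in spirit, but the route is organized differently.  The paper derives \emph{two} geometric Poincar\'e inequalities: one for monotone solutions (Proposition~\ref{prop1}, Theorem~\ref{T1}, Corollary~\ref{cor1}), obtained by testing the linearized system with $\phi^2/u_n$, $\psi^2/v_n$ and then with $|\nabla u|\phi$, $|\nabla v|\psi$, whose residual is $-F_{12}\bigl|\sqrt{-v_n/u_n}\,\nabla u+\sqrt{u_n/(-v_n)}\,\nabla v\bigr|^2\le 0$ when $F_{12}\ge 0$; and one for stable solutions (Theorem~\ref{T2}, Corollary~\ref{cor2}), obtained by testing stability with $\phi=|\nabla u|\varphi$, $\psi=|\nabla v|\varphi$, whose residual is $2F_{12}(|\nabla u||\nabla v|-\nabla u\cdot\nabla v)\le 0$ when $F_{12}\le 0$.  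You instead propose to first upgrade \eqref{mon} to stability via Picone (this is exactly Proposition~\ref{prop2}) and then run a \emph{single} Sternberg--Zumbrun computation on the stability inequality, choosing $\psi=\pm|\nabla v|\eta$ so that the $F_{12}$ residual becomes $\mp 2F_{12}(|\nabla u||\nabla v|\pm\nabla u\cdot\nabla v)\le 0$ in either sign regime.  That works and is arguably a cleaner unification, at the mild cost of losing the explicit $(v_n/u_n)$--structured residual that the paper re-uses in the proof of the enhanced statement.  The subsequent logarithmic cutoff and the reduction to Lemma~\ref{lemma7} match the paper.

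For the enhanced statement you depart from the paper: where the paper extracts $\omega_u=\omega_v$ by forcing the $F_{12}$ residual in \eqref{u111} (monotone) or in Theorem~\ref{T2} (stable) to vanish and reading off $\nabla u\parallel\nabla v$ at a suitable point, you instead differentiate the first equation of \eqref{system} in a direction $e$ with $e\cdot\omega_u=0$, $e\cdot\omega_v\neq 0$, obtaining $F_{12}(u,v)\,\partial_e v=0$ a.e.  In the monotone case \eqref{monF12} this is fine: $v_n<0$ forces $\overline v'\neq 0$ everywhere, hence $\partial_e v\neq 0$ everywhere, hence $F_{12}(u,v)\equiv 0$, contradicting $F_{12}>0$ on $\Omega'$.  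In the stable case \eqref{stabF12}, however, your assertion that ``$\partial_e v$ does not vanish on any open set'' is a genuine gap.  Without monotonicity, $\overline v'$ may vanish on an entire interval (the profile may have a flat piece), so the open set $U=(u,v)^{-1}(I_u\times I_v)$ where $F_{12}\neq 0$ need not meet $\{\nabla v\neq 0\}$; your contradiction then never materializes.  This is precisely what the paper plugs with the auxiliary statement \eqref{qqq}, established by the argument of Theorem~1.8 in \cite{DP}: there is a nonempty open $\Omega''$ on which $u\in I_u$, $v\in I_v$, $\nabla u\neq 0$ and $\nabla v\neq 0$.  To complete your proof along your own lines you must supply a lemma of that type (or revert to the paper's residual argument).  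Incidentally, you inherit from the statement a sign oddity -- \eqref{stab} posits $F_{12}\le 0$ on $\Im(u,v)$ while \eqref{stabF12} posits $F_{12}>0$ on $(I_u\times I_v)\cap\Im(u,v)$ -- but this does not affect the structure of your argument and is an issue with the way the theorem is stated, not with your proposal.
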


\begin{rem}
Notice that the hypothesis that~$F_{12}(u,v)$ is not identically zero cannot be removed
if we want to conclude that~$u,v$ have one-dimensional symmetry with the same
unit vector~$\omega$.
Indeed, in~$\R^2$ one can consider the system in \eqref{system} with the Laplace
operator and~$F\equiv0$.
Then, if one take the functions~$u(x_1,x_2)=x_2$ and~$v(x_1,x_2)=x_1-x_2$,
it is easy to see that~$(u,v)$ is a monotone and stable solution to \eqref{system}
and \eqref{mon}, \eqref{stab} and \eqref{EE} are satisfied, but~$u$ and~$v$ have one-dimensional
symmetry with a different vector~$\omega$.

\noindent
Notice also that one can consider a more general function $F$ such
that~$F_{12}(u,v)=0$, that is a system with two independent equations,
and there is no reason why~$u$ and~$v$ should have one-dimensional symmetry
with the same vector.
\end{rem}

We notice that, as paradigmatic examples satisfying the assumptions of Theorem \ref{T1D},
one may take the $p$-Laplacian, with $p\in (1, +\infty)$ if $\left\lbrace \nabla u=0\right\rbrace=\varnothing$
and any $p\in [2, +\infty )$ if $\left\lbrace \nabla u=0\right\rbrace \neq\varnothing$
(in this case, for instance, $a(t)=t^{p-2}$)
or the mean curvature operator (in this case, $a(t)=(1+t^2)^{-1/2}$).
Moreover, we observe that Theorem \ref{T1D} holds even if $a$ and $b$ are two
different functions satisfying the hypotheses (e.g., one can take $a$ to be of $p$-Laplacian
type and $b$ of mean curvature type).

To prove Theorem \ref{T1D} we borrow a large number of ideas from \cite{Fa} and \cite{FSV},
and exploit some techniques of \cite{SZ1, SZ2}.
In particular, we will show that a formula proved in \cite{SZ1, SZ2} and its extension obtained in \cite{FSV}
for elliptic equations still hold for systems (see Corollaries \ref{cor1} and \ref{cor2}).
Since this formula bounds a weighted $L^2$-norm of any test function
by a weighted $L^2$-norm of its gradient,
we may see it as a weighted Poincar\'{e} type inequality.
Such a formula is geometric in spirit, since it bounds
tangential gradients and curvatures of level sets of monotone and stable solutions
in terms of suitable energy integrals.

Our result extends the one obtained in \cite{FG},
where the authors studied problem \eqref{system} in the case
$a=b=Id$, and use this kind of geometric Poincar\'{e} inequality to show
that in $\R^2$ any stable solution has a one-dimensional symmetry.
Of course in our setting several technical and conceptual complications
arise due to the possible degeneracy of the operators considered
and to the nonlinear dependence on the gradient terms.

Moreover, as a particular case, Theorem \ref{T1D} comprises a conjecture of De Giorgi
for phase separations in $\R^2$ (see the end of Section \ref{sec:appl}).

We refer the reader to \cite{FV} for a recent review on the conjecture of De Giorgi
and related topics.

%\bigskip

The paper is organized as follows.
In Section \ref{sec:useful} we collect some preliminary material.
Sections \ref{sec:mon} and \ref{sec:stable} are devoted to show that
some geometric Poincar\'{e} type inequalities hold for monotone
and stable solutions to \eqref{system} respectively.
In Section \ref{sec:levset} we develop the level set analysis.
In Section \ref{sec:proof} we provide the proof of Theorem \ref{T1D},
by using the results obtained in the previous sections.
Finally, in Section \ref{sec:appl}, we give an application of Theorem \ref{T1D},
namely we prove that a conjecture of De Giorgi holds in $\R^2$
for systems like \eqref{system}, and in particular for phase separations.

\section{Some useful results}\label{sec:useful}
In this section we collect some results that we will use in the sequel.

First, we have the following lemma (see Lemma 2.1 in \cite{FSV} for a simple proof):
\begin{lemma}\label{lemma1}
For any $\xi\in\R^n \setminus \left\lbrace 0\right\rbrace$,
the matrices $A(\xi)$, $B(\xi)$ are symmetric and positive definite,
and their eigenvalues are $\lambda_1 (|\xi |), \ldots , \lambda_n (|\xi |)$
and $\gamma_1 (|\xi |), \ldots ,\gamma_n (|\xi |)$ respectively.

Moreover
$$
A(\xi) \xi \cdot \xi = |\xi |^2 \lambda_1 (|\xi |), \qquad
B(\xi) \xi \cdot \xi = |\xi |^2 \gamma_1 (|\xi |).
$$
\end{lemma}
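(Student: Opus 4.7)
The plan is to work directly from the definition of $A(\xi)$, noting that the statement for $B(\xi)$ follows by the identical argument after replacing $a$ by $b$ and $\lambda_i$ by $\gamma_i$. The key observation is that the definition
$$
A_{hk}(\xi) = \frac{a'(|\xi|)}{|\xi|}\,\xi_h\xi_k + a(|\xi|)\,\delta_{hk}
$$
realizes $A(\xi)$ as a rank-one update of a multiple of the identity, namely $A(\xi) = a(|\xi|)\,I + \frac{a'(|\xi|)}{|\xi|}\,\xi\,\xi^{T}$. Symmetry is then immediate, since both $I$ and $\xi\,\xi^{T}$ are symmetric.

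For the spectral decomposition I would exploit the fact that $\xi\,\xi^{T}$ has $\xi$ as an eigenvector with eigenvalue $|\xi|^2$ and that every vector $\eta$ orthogonal to $\xi$ lies in its kernel. Applied to $A(\xi)$ this yields
$$
A(\xi)\xi = \Big(a(|\xi|)+a'(|\xi|)\,|\xi|\Big)\,\xi = \lambda_1(|\xi|)\,\xi,
$$
while $A(\xi)\eta = a(|\xi|)\eta$ for every $\eta$ in the $(n-1)$-dimensional orthogonal complement of $\xi$, giving the eigenvalue $a(|\xi|) = \lambda_2(|\xi|) = \cdots = \lambda_n(|\xi|)$ with the correct multiplicity, in agreement with \eqref{lambda}. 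Positive definiteness is then a direct consequence of \eqref{ab1} and \eqref{ab2}, which force $\lambda_i(|\xi|)>0$ for every $i$ when $|\xi|>0$.

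Finally, the identity $A(\xi)\xi\cdot\xi = |\xi|^2\lambda_1(|\xi|)$ drops out of the eigenvalue computation just performed (taking the inner product of $A(\xi)\xi = \lambda_1(|\xi|)\xi$ with $\xi$), and can alternatively be verified by a one-line expansion of $\sum_{h,k}A_{hk}(\xi)\,\xi_h\xi_k$. I do not foresee any real obstacle: the whole statement is a routine piece of linear algebra about rank-one perturbations of the identity, which is presumably why the authors are content to cite Lemma~2.1 of \cite{FSV} rather than reproduce the calculation.
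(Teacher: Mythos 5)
Your proposal is correct, and it is essentially the same argument that the paper relies on: the paper itself does not reproduce a proof of Lemma~2.1, but simply defers to Lemma~2.1 of \cite{FSV}, whose proof is exactly the rank-one-perturbation computation you describe. Writing $A(\xi)=a(|\xi|)I+\frac{a'(|\xi|)}{|\xi|}\,\xi\xi^{T}$, checking that $\xi$ is an eigenvector with eigenvalue $a(|\xi|)+a'(|\xi|)|\xi|=\lambda_1(|\xi|)$ while the orthogonal complement of $\xi$ gives $a(|\xi|)=\lambda_2(|\xi|)=\cdots=\lambda_n(|\xi|)$ with multiplicity $n-1$, and then invoking \eqref{ab1}--\eqref{ab2} for positivity, is precisely the intended route, and the final display follows by pairing $A(\xi)\xi=\lambda_1(|\xi|)\xi$ with $\xi$. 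No gaps.
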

It follows from Lemma \ref{lemma1} that, for any $t\in\R\setminus \left\lbrace 0\right\rbrace$,
$$
   \Lambda_i (-t) = \Lambda_i (t) > 0,    \qquad    \Gamma_i (-t) = \Gamma_i (t) > 0 .
$$
Moreover, for any $V, W \in\R^n$, and any $\xi\in\R^n \setminus \left\lbrace 0\right\rbrace$,
\begin{equation}\label{Axi}
     0 \leq A(\xi)\left(V-W\right) \cdot\left(V-W\right)   =
      A(\xi)V\cdot V +  A(\xi) W \cdot W    - 2 A(\xi) V\cdot W ,
\end{equation}
\begin{equation}\label{Bxi}
    0 \leq B(\xi)\left(V-W\right) \cdot\left(V-W\right)   =
      B(\xi)V\cdot V +  B(\xi) W \cdot W    - 2 B(\xi) V\cdot W .
\end{equation}

\begin{lemma}\label{lemma2}
Let $(u, v)$ be a weak solution of (\ref{system})
such that $u\in C^1 (\R^n) \cap C^2 (\left\lbrace \nabla u \neq 0\right\rbrace )$,
$v\in C^1 (\R^n) \cap C^2 (\left\lbrace \nabla v \neq 0\right\rbrace )$,
with $\nabla u, \nabla v \in W^{1,2}_{loc}(\R^n)$.
Suppose that either (A2) holds or that $\left\lbrace \nabla u = 0 \right\rbrace =\varnothing$,
and that either (B2) holds or that $\left\lbrace \nabla v = 0 \right\rbrace =\varnothing$.

Then, for any $j=1, \ldots, n$, $(u_j , v_j )$ is a weak solution of
\begin{eqnarray}\label{system1}
\left\{
\begin{array}{ll}
    div\left( A\left(\nabla u\right) \nabla u_j \right)   = F_{11}(u, v) u_j + F_{12} (u,v) v_j,         \\
    div\left( B\left(\nabla v\right) \nabla v_j \right) = F_{21}(u, v) u_j   +  F_{22}(u,v) v_j.
 \end{array}
\right.
\end{eqnarray}
\end{lemma}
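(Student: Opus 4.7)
The plan is to differentiate the two equations of \eqref{system} with respect to $x_j$ in the weak sense. The natural starting point is the weak formulation of the first equation,
\begin{equation*}
\int_{\R^n} a(|\nabla u|)\nabla u\cdot\nabla\phi\,dx=-\int_{\R^n}F_1(u,v)\,\phi\,dx\quad\text{for all }\phi\in C^\infty_0(\R^n),
\end{equation*}
and its analogue for the second equation. Into this identity I plan to substitute a test function of the form $\partial_j\phi$ (still an admissible test function), and then integrate by parts once on each side in order to bring a derivative onto $\nabla u$ on the left and onto $F_1(u,v)$ on the right. This will produce exactly the weak form of the first equation in~\eqref{system1}; the argument for the second equation is identical, \emph{mutatis mutandis}.

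For the left-hand side, the key identity to justify is
\begin{equation*}
\partial_j\bigl(a(|\nabla u|)\,u_k\bigr)=A_{k\ell}(\nabla u)\,\partial_\ell u_j\qquad\text{a.e. in }\R^n,
\end{equation*}
since then
\begin{equation*}
\int a(|\nabla u|)\nabla u\cdot\nabla(\partial_j\phi)\,dx
=-\int A(\nabla u)\nabla u_j\cdot\nabla\phi\,dx.
\end{equation*}
Under~(A1), the hypothesis $\{\nabla u=0\}=\varnothing$ ensures that $\xi\mapsto a(|\xi|)\xi_k$ is $C^1$ along the whole image of $\nabla u$, so the identity follows from the classical chain rule together with $\nabla u\in W^{1,2}_{loc}$. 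Under~(A2), the condition that $t\mapsto ta(t)$ is $C^1$ on $[0,+\infty)$ means exactly that the map $\xi\mapsto a(|\xi|)\xi_k$ extends to a $C^1$ function on all of $\R^n$ whose Jacobian at $\xi=0$ is $a(0)\,\delta_{jk}$, matching the definition $A_{hk}(0):=a(0)\delta_{hk}$; the chain rule for the composition of a $C^1$ field with a $W^{1,2}_{loc}$ vector field then delivers the identity a.e., and the preceding integration by parts is legitimate.

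For the right-hand side, one uses the chain rule for the composition of a locally Lipschitz function with a $C^1$ (hence $W^{1,\infty}_{loc}$) map. Since $F\in C^{1,1}_{loc}(\R^2)$, the function $F_1$ is locally Lipschitz, so $F_1(u,v)\in W^{1,\infty}_{loc}(\R^n)$, with
\begin{equation*}
\partial_j\bigl[F_1(u,v)\bigr](x)=F_{11}(u(x),v(x))\,u_j(x)+F_{12}(u(x),v(x))\,v_j(x)
\end{equation*}
for a.e.\ $x\in\mathcal{D}_{uv}$. Integrating by parts,
\begin{equation*}
\int F_1(u,v)\,\partial_j\phi\,dx=-\int\bigl[F_{11}(u,v)\,u_j+F_{12}(u,v)\,v_j\bigr]\phi\,dx,
\end{equation*}
which is precisely the weak right-hand side of the first equation in~\eqref{system1}. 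Combining with the previous paragraph gives
\begin{equation*}
\int A(\nabla u)\nabla u_j\cdot\nabla\phi\,dx=-\int\bigl[F_{11}(u,v)\,u_j+F_{12}(u,v)\,v_j\bigr]\phi\,dx,
\end{equation*}
as required.

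The main obstacle is the careful treatment of the bad sets. On the $u$-side, the difficulty is the possible degeneracy at $\{\nabla u=0\}$: without~(A1), one must exploit~(A2) to ensure $\xi\mapsto a(|\xi|)\xi$ is genuinely $C^1$ at the origin, so that no extra boundary term from this set appears; this is precisely where the precise structural hypothesis~(A2) is used, and it is not purely formal. On the $F$-side, the analogous issue is the measure-zero set $\mathcal{N}$ where the second derivatives of $F$ fail to exist, pulled back to $\mathcal{N}_{uv}$ in $\R^n$; the chain rule for Lipschitz-composed-with-Sobolev maps, together with~\eqref{borel}, allows one to assign an almost-everywhere meaning to $\partial_j[F_1(u,v)]$ on $\mathcal{D}_{uv}$, consistently with the convention adopted in~\eqref{stable1}. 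Once these two points are settled, the proof reduces to the sequence of integrations by parts described above.
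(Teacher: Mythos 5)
Your proof is correct and follows the same overall plan as the paper's: test the weak formulation of~\eqref{system} with $\partial_j\phi$ and integrate by parts, reducing the lemma to the almost-everywhere chain-rule identity $\partial_j\bigl(a(|\nabla u|)\nabla u\bigr)=A(\nabla u)\nabla u_j$ and the analogous identity for the right-hand side. The difference is in how the left-hand chain rule is justified across the critical set $\{\nabla u=0\}$. The paper only establishes that $\bar{\mathcal A}(\xi)=a(|\xi|)\xi$ lies in $W^{1,\infty}_{loc}(\R^n)$ under (A2) and then invokes Stampacchia's theorem twice: once to conclude $\partial_j\mathcal A=0$ a.e.\ on $\{\mathcal A=0\}=\{\nabla u=0\}$, and once to conclude $\nabla u_j=0$ (hence $A(\nabla u)\nabla u_j=0$) a.e.\ on that same set; the identity on $\{\nabla u\neq 0\}$ then follows by direct computation, and the two pieces are glued. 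You instead prove the stronger statement that $\bar{\mathcal A}$ is genuinely $C^1$ on all of $\R^n$ under (A2), with Jacobian $a(0)\,\mathrm{Id}$ at the origin, and then apply the chain rule for $C^1\circ W^{1,2}_{loc}$ once, globally. This is valid: differentiability of $\bar{\mathcal A}$ at $0$ follows from continuity of $a$, and continuity of the Jacobian $A(\xi)$ at $\xi=0$ follows because $t\,a'(t)\to 0$ as $t\to 0^+$, which is forced by $g(t)=t\,a(t)\in C^1([0,\infty))$ together with $g'(0)=a(0)$. Your route is a bit cleaner (no Stampacchia, no case distinction on the zero set), at the small cost of spelling out this $C^1$ extension, which the paper does not need. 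On the right-hand side both arguments rely on the same chain rule for a locally Lipschitz $F_1$ composed with the $C^1$ map $(u,v)$, together with the restriction to $\mathcal D_{uv}$ that the paper formalizes in passing from \eqref{system2} to \eqref{system3}; you are slightly more explicit about this, and neither proof digs deeper into that measure-theoretic point than the other.

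One small inaccuracy: you phrase the dichotomy as ``under (A1)'' versus ``under (A2)'', but the lemma's hypothesis is ``(A2) or $\{\nabla u=0\}=\varnothing$''; only the nondegeneracy part of (A1) is actually assumed or used, not the $L^\infty_{loc}$ bound on $t^2\lambda_1(t)$. This does not affect the argument.
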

\begin{proof}
First of all, we observe that
\begin{equation}\label{map}
\mathrm{the\ map\ } x\mapsto \mathcal{A}(x) := a ( |\nabla u(x) |) \nabla u(x) \mathrm{\ belongs\ to\ }
W^{1,1}_{loc}(\R^n , \R^n),
\end{equation}
and
\begin{equation}\label{map1}
\mathrm{the\ map\ } x\mapsto \mathcal{B}(x) := b ( |\nabla u(x) |) \nabla u(x) \mathrm{\ belongs\ to\ }
W^{1,1}_{loc}(\R^n , \R^n).
\end{equation}
Let us show \eqref{map}. It is obvious if $\left\lbrace \nabla u =0\right\rbrace =\varnothing$,
while, if (A2) holds, we have that the map
$$ \xi\in\R^n \mapsto \bar{\mathcal{A}} (\xi) := a(|\xi |) \xi $$
belongs to $W^{1, \infty}_{loc}(\R^n)$,
and so \eqref{map} follows by writing $\mathcal{A}(x) = \bar{\mathcal{A}} (\nabla u(x))$.
In the same way one shows \eqref{map1}.

From \eqref{map} and \eqref{map1}, we have that, for any $\phi,\psi\in C^{\infty}_0 (\R^n, \R^n)$,
$$ - \int_{\R^n} \partial_j \left( a\left( |\nabla u |\right) \nabla u\right) \cdot \phi \, dx =
    \int_{\R^n}  a\left( |\nabla u |\right) \nabla u \cdot \partial_j \phi \, dx, $$
and
$$ - \int_{\R^n} \partial_j \left( b\left( |\nabla v |\right) \nabla v\right) \cdot \psi \, dx =
    \int_{\R^n}  b\left( |\nabla v |\right) \nabla v \cdot \partial_j \psi \, dx. $$

Moreover, by \eqref{map} and Stampacchia's Theorem
(see, for instance, Theorem 6.19 of \cite{LL}), we get
that $\partial_j \mathcal{A}(x)=0$ for almost any $x\in\left\lbrace \mathcal{A}=0\right\rbrace$,
that is
$$ \partial_j \left(a\left( |\nabla u(x) |\right) \nabla u(x)\right) =0 $$
for almost any $x\in \left\lbrace \nabla u =0\right\rbrace$.

Similarly, by using again Stampacchia's Theorem and (A2),
we conclude
that $\nabla u_j (x)=0$, and then $A\left(\nabla u(x)\right)
\nabla u_j (x)=0$,
for almost any $x\in\left\lbrace \nabla u =0\right\rbrace$.

A direct computation also shows that on $\left\lbrace \nabla u \neq 0\right\rbrace$
$$ \partial_j \left( a\left( |\nabla u |\right) \nabla u\right) = A\left( \nabla u\right) \nabla u_j .$$

As a consequence,
$$ \partial_j \left( a\left( |\nabla u |\right) \nabla u\right) = A\left( \nabla u\right) \nabla u_j $$
almost everywhere.

Reasoning in the same way, we conclude also that
$$ \partial_j \left( b\left( |\nabla v |\right) \nabla v\right) = B\left( \nabla v\right) \nabla v_j $$
almost everywhere.

Let now $\phi, \psi\in C^{\infty}_0 \left( \R^n\right)$.
We use the above observations to obtain that
\begin{eqnarray*}
 && - \int_{\R^n} A\left(\nabla u\right) \nabla u_j \cdot \nabla\phi
     + F_{11}(u,v) u_j \phi   +   F_{12}(u,v) v_j \phi  \, dx   \\
  & =& -  \int_{\R^n} \partial_j \left( a\left( |\nabla u|\right) \nabla u \right) \cdot \nabla\phi
      + \partial_j \left( F_1 (u,v)\right) \phi  \, dx \\
  &=& \int_{\R^n} a\left( |\nabla u|\right) \nabla u \cdot \nabla \phi_j
          +    F_1 (u,v) \phi_j \, dx,
\end{eqnarray*}
and
\begin{eqnarray*}
  &&- \int_{\R^n} B\left(\nabla v\right) \nabla v_j \cdot \nabla\psi
     + F_{21}(u,v) u_j \psi   +   F_{22}(u,v) v_j \psi  \, dx   \\
    &=& -  \int_{\R^n} \partial_j \left( b\left( |\nabla v|\right) \nabla v \right) \cdot \nabla\psi
      + \partial_j \left( F_2 (u,v)\right) \psi  \, dx \\
    &=& \int_{\R^n} b\left( |\nabla v|\right) \nabla v \cdot \nabla \psi_j
          +    F_2 (u,v) \psi_j \, dx,
\end{eqnarray*}
which vanish, since $(u,v)$ is a weak solution of \eqref{system}.
\end{proof}

We observe that in the proof of Lemma \ref{lemma2} it is sufficient to assume
that $\nabla u, \nabla v\in W^{1,1}_{loc}(\R^n)$.
Since such a generality is not needed here, we assumed,
for simplicity, $\nabla u, \nabla v\in W^{1,2}_{loc}(\R^n)$
in order to use the above result in the sequel.

Let us notice that \eqref{system1} means that, for any
$\phi, \psi\in C^{\infty}_0 (\R^n)$, and for any~$j=1, \ldots , n$,
\begin{equation}\begin{split}\label{system2}
 & \int_{\R^n} A\left(\nabla u\right)\nabla u_j \cdot \nabla\phi + F_{11}(u,v)\, u_j\,\phi + F_{12}(u,v) v_j \phi \, dx =0, \\
& \int_{\R^n} B\left(\nabla v\right)\nabla v_j \cdot \nabla\psi + F_{12}(u,v)\, u_j \, \psi + F_{22}(u,v)\, v_j \, \psi \, dx =0.
                \end{split}
\end{equation}
Since the integrals in \eqref{system2} may not be well defined,
recalling the definitions of the sets $\mathcal{D}, \mathcal{N}, \mathcal{N}_{uv}, \mathcal{D}_{uv}$
given in the Introduction and using \eqref{borel} we can say that~$(u_j , v_j)$ satisfies
\begin{equation}\begin{split}\label{system3}
 & \int_{\R^n} A\left(\nabla u\right)\nabla u_j \cdot \nabla\phi \, dx
+ \int_{\mathcal{D}_{uv}} F_{11}(u,v)\, u_j\,\phi + F_{12}(u,v) v_j \phi \, dx =0, \\
& \int_{\R^n} B\left(\nabla v\right)\nabla v_j \cdot \nabla\psi \, dx
+ \int_{\mathcal{D}_{uv}}F_{12}(u,v)\, u_j \, \psi + F_{22}(u,v)\, v_j \, \psi \, dx =0.
                \end{split}
\end{equation}

In the sequel we will need to use \eqref{system3} for a less regular test functions.
To do this, we prove the following:
\begin{lemma}\label{lemma2bis}
Under the assumptions of Lemma \ref{lemma2}, we have that \eqref{system3}
holds for any~$j=1, \ldots, n$, any~$\phi, \psi\in W^{1,2}_0 (B)$ and any ball~$B \subset\R^n$.
\end{lemma}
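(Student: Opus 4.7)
The plan is the standard density argument: given $\phi,\psi\in W^{1,2}_0(B)$, approximate them by $\phi_k,\psi_k\in C^\infty_0(B)$ (extended by zero to all of $\R^n$) converging in $W^{1,2}_0(B)$, apply the already proved identity \eqref{system3} to each smooth pair $(\phi_k,\psi_k)$, and pass to the limit. The substance is to verify that every factor appearing in \eqref{system3} lies in a good function space on $B$, so that the two sides of \eqref{system3} define continuous linear functionals on $W^{1,2}_0(B)\times W^{1,2}_0(B)$.

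The real work is to show that $A(\nabla u)$ and $B(\nabla v)$ are essentially bounded on $B$. For the entries one has the pointwise bound $|A_{hk}(\xi)|\le |a'(|\xi|)|\,|\xi|+|a(|\xi|)|\le|\lambda_1(|\xi|)|+2|a(|\xi|)|$ (using $|\xi_h\xi_k|\le|\xi|^2$). Under (A2), both $\lambda_1=(ta(t))'$ and $a$ are continuous on $[0,+\infty)$, hence bounded on $\{|\xi|\le M\}$; composing with the continuous map $\nabla u$, which takes the compact set $\overline{B}$ into a bounded subset of $\R^n$, yields $A(\nabla u)\in L^\infty(B)$. If instead $\{\nabla u=0\}=\varnothing$, continuity of $\nabla u$ on the compact set $\overline{B}$ forces $|\nabla u|$ to be bounded between two positive constants, so $\nabla u(\overline{B})$ is contained in a compact subset of $\R^n\setminus\{0\}$ on which each $A_{hk}$ is continuous (since $a\in C^1((0,+\infty))$), again giving $A(\nabla u)\in L^\infty(B)$. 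The same argument, with (B2) or $\{\nabla v=0\}=\varnothing$, gives $B(\nabla v)\in L^\infty(B)$.

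Combined with $\nabla u_j,\nabla v_j\in L^2(B)$ (by the assumption $\nabla u,\nabla v\in W^{1,2}_{loc}(\R^n)$), we obtain $A(\nabla u)\nabla u_j,\,B(\nabla v)\nabla v_j\in L^2(B,\R^n)$. For the lower-order terms, $F\in C^{1,1}_{loc}(\R^2)$ implies that $F_1,F_2$ are locally Lipschitz, so by Rademacher the second derivatives $F_{11},F_{12},F_{22}$ exist on $\mathcal{D}$ and are in $L^\infty_{loc}(\R^2)$; since $u,v\in C^1$ are bounded on $\overline{B}$, the values $(u,v)$ lie in a compact subset of $\R^2$, hence $F_{11}(u,v),F_{12}(u,v),F_{22}(u,v)\in L^\infty(\mathcal{D}_{uv}\cap B)$, and the continuous functions $u_j,v_j$ are bounded on $\overline{B}$.

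With these integrability facts, one applies \eqref{system3} to each pair $(\phi_k,\psi_k)$ and passes to the limit: the principal terms converge by Cauchy--Schwarz using $\nabla\phi_k\to\nabla\phi$ and $\nabla\psi_k\to\nabla\psi$ in $L^2(B,\R^n)$, while the lower-order integrals over $\mathcal{D}_{uv}\cap B$ converge by Hölder using $\phi_k\to\phi$ and $\psi_k\to\psi$ in $L^2(B)$. This yields \eqref{system3} for the original pair $(\phi,\psi)\in W^{1,2}_0(B)^2$. The main (and essentially only) obstacle is the case analysis in the second paragraph that produces the $L^\infty$-bound on the matrix coefficients under either of the hypotheses (A1)/(A2) and (B1)/(B2); once this is in place, the density argument is routine.
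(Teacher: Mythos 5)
Your proposal is correct and follows essentially the same route as the paper: approximate by smooth test functions, establish that $A(\nabla u)$ and $B(\nabla v)$ are essentially bounded on $B$ by splitting into the cases ``$\left\lbrace\nabla u=0\right\rbrace=\varnothing$'' and ``(A2) holds'' (the paper phrases this as $K_A<+\infty$), and pass to the limit via Cauchy--Schwarz. One small slip: you refer to ``(A1)/(A2)'', but the hypothesis of Lemma~\ref{lemma2} is ``either (A2) or $\left\lbrace\nabla u=0\right\rbrace=\varnothing$'', which is weaker than (A1); your actual argument correctly uses only $\left\lbrace\nabla u=0\right\rbrace=\varnothing$ in that branch, so the substance is fine.
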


\begin{proof}
Let us prove the first equality in~\eqref{system3}.
Given~$\phi\in W^{1,2}_0 (B)$, we consider a sequence of functions~$\phi_k \in C^{\infty}_0 (B)$ which converge to~$\phi$ in~$W^{1,2}_0 (B)$.
Let~$m_u$ and~$M_u$ (respectively~$m_v$ and~$M_v$) be the minimun and the
maximum of~$|\nabla u|$ (respectively~$|\nabla v|$) on the closure of~$B$.
Moreover, let
$$
  K_A := \sup_{m_u \leq|\xi|\leq M_u} |A(\xi)|, \qquad K_B := \sup_{m_v \leq|\xi|\leq M_v}|B(\xi)|.
$$
Notice that~$K_A <+\infty$, since~$0\leq m_u \leq M_u <+\infty$;
in fact, if~$\left\lbrace \nabla u=0\right\rbrace =\varnothing$, then $m_u >0$,
whereas, if (A2) holds, then~$A\in L^{\infty}_{loc}(\R^n)$.
In the same way, one has that also~$K_B <+\infty$.

Now, since the assumptions of Lemma~\ref{lemma2} hold, we deduce from~\eqref{system3}
\begin{equation}\label{density}
  \int_{\R^n} A\left(|\nabla u|\right)\nabla u_j \cdot \nabla\phi_k \, dx
+ \int_{\mathcal{D}_{uv}} F_{11}(u,v)\, u_j\,\phi_k + F_{12}(u,v) v_j \phi_k \, dx =0.
\end{equation}
Also,
\begin{eqnarray*}
&& \left|\int_{\R^n} A\left(|\nabla u|\right)\nabla u_j \cdot \left(\nabla\phi_k -\nabla\phi\right) \, dx\right| \\
&&\qquad+ \left|\int_{\mathcal{D}_{uv}} F_{11}(u,v)\, u_j (\phi_k -\phi) + F_{12}(u,v) v_j (\phi_k -\phi) \, dx \right| \\
&\leq& K_A \left(\int_B |\nabla u_j|^2 dx \right)^{1/2}\left(\int_B |\nabla (\phi_k -\phi)|^2 dx \right)^{1/2} \\
&&\qquad+ \left(\int_{B\cap\mathcal{D}_{uv}} |F_{11}(u,v)\, u_j|^2 dx \right)^{1/2} \left(\int_{B\cap\mathcal{D}_{uv}} |\nabla (\phi_k -\phi)|^2 dx \right)^{1/2} \\
 &&\qquad+ \left(\int_{B\cap\mathcal{D}_{uv}} |F_{12}(u,v)\, v_j|^2 dx \right)^{1/2} \left(\int_{B\cap\mathcal{D}_{uv}} |\nabla (\phi_k -\phi)|^2 dx \right)^{1/2},
\end{eqnarray*}
which tends to zero as~$k$ tends to infinity, because of the assumptions on~$u,v$.
The latter consideration and~\eqref{density} give the first equality in~\eqref{system3}.
Reasoning in a similar way, we obtain also the second equality in~\eqref{system3}.
\end{proof}

We will now consider the \emph{tangential gradient} with respect to a regular level set.
Given $w\in C^1 \left(\R^n \right)$, we define the level set of $w$ at $x$ as
\begin{equation}\label{levelset}
L_{w,x} := \left\lbrace y\in\R^n \mathrm{\ s.\ t.\ } w(y)=w(x)\right\rbrace .
\end{equation}
If $\nabla w(x)\neq 0$, $L_{w,x}$ is a hypersurface near $x$ and
one can consider the projection of any vector onto the tangent plane:
in particular, the tangential gradient, which will be denoted as $\nabla_{L_{w,x}}$,
is the projection of the gradient.
This means that, given $f\in C^1 \left( B_r (x)\right)$, for $r>0$,
the tangential gradient is
\begin{equation}\label{lv}
\nabla_{L_{w,x}} f(x) := \nabla f(x) - \left( \nabla f(x) \cdot \frac{\nabla w(x)}{|\nabla w(x)|}\right)
\frac{\nabla w(x)}{|\nabla w(x)|}.
\end{equation}

We will use the following lemma (see Lemma 2.3 in \cite{FSV} for a simple proof):
\begin{lemma}\label{lemma3}
Let $U\subseteq \R^n$ be an open set, $w\in C^2 \left(U\right)$ and $x\in U$ such that $\nabla w(x) \neq 0$.
Then
\begin{eqnarray*}
&& a\left(|\nabla w(x)|\right) \left[ \big| \nabla |\nabla w|(x) \big|^2   - \sum_{j=1}^n |\nabla w_j (x)|^2 \right] \\
&&\qquad - a' \left(|\nabla w(x)|\right) |\nabla w(x)| \, \big| \nabla_{L_{w,x}} |\nabla w|(x) \big|^2   \\
&&=  \left( A\left( \nabla w(x)\right) \left( \nabla |\nabla w|(x)\right) \right)  \cdot
    \left( \nabla |\nabla w|(x) \right)
    - \left( A\left( \nabla w(x)\right) \nabla w_j (x) \right) \cdot \nabla w_j (x),
\end{eqnarray*}
and
\begin{eqnarray*}
&& b\left(|\nabla w(x)|\right) \left[ \big| \nabla |\nabla w|(x) \big|^2   - \sum_{j=1}^n |\nabla w_j (x)|^2 \right]\\
&&\qquad - b' \left(|\nabla w(x)|\right) |\nabla w(x)| \, \big| \nabla_{L_{w,x}} |\nabla w|(x) \big|^2   \\
&&=  \left( B\left( \nabla w(x)\right) \left( \nabla |\nabla w|(x)\right) \right)  \cdot
    \left( \nabla |\nabla w|(x) \right)
    - \left( B\left( \nabla w(x)\right) \nabla w_j (x) \right) \cdot \nabla w_j (x).
\end{eqnarray*}
\end{lemma}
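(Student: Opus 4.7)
The proof of Lemma \ref{lemma3} is essentially a direct algebraic computation exploiting the explicit form of the matrix $A(\xi)$ (and $B(\xi)$), so the plan is to carry out that computation and match it against the claimed identity, using a key decomposition of the gradient of $|\nabla w|$ into its tangential and normal parts with respect to the level set $L_{w,x}$.

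The first step is to expand both sides on the right-hand side of the claim using the definition $A_{hk}(\xi)=\frac{a'(|\xi|)}{|\xi|}\xi_h\xi_k+a(|\xi|)\delta_{hk}$. For an arbitrary vector $V$ one has $A(\nabla w)V\cdot V=\frac{a'(|\nabla w|)}{|\nabla w|}(\nabla w\cdot V)^2+a(|\nabla w|)|V|^2$. Applying this with $V=\nabla|\nabla w|$ and with $V=\nabla w_j$ (then summing over $j$), the expression $(A(\nabla w)\nabla|\nabla w|)\cdot\nabla|\nabla w|-(A(\nabla w)\nabla w_j)\cdot\nabla w_j$ splits into an $a(|\nabla w|)$-piece, which is already $a(|\nabla w|)\bigl[|\nabla|\nabla w||^2-\sum_j|\nabla w_j|^2\bigr]$, and an $a'(|\nabla w|)$-piece that I need to simplify.

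The second step is to simplify that $a'$-piece. Differentiating $|\nabla w|^2=\sum_i w_i^2$ yields the pointwise identity $\partial_j|\nabla w|=\frac{\nabla w\cdot\nabla w_j}{|\nabla w|}$, which is valid at $x$ because $\nabla w(x)\neq 0$. Squaring and summing over $j$ gives $\sum_j(\nabla w\cdot\nabla w_j)^2=|\nabla w|^2|\nabla|\nabla w||^2$. Plugging this in, the $a'$-piece collapses to $a'(|\nabla w|)\bigl[\frac{(\nabla w\cdot\nabla|\nabla w|)^2}{|\nabla w|}-|\nabla w|\,|\nabla|\nabla w||^2\bigr]$.

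The third step is the tangential-gradient identification. From the definition \eqref{lv}, for any $f\in C^1$ one has the orthogonal splitting $|\nabla f|^2=|\nabla_{L_{w,x}}f|^2+\bigl(\nabla f\cdot \tfrac{\nabla w}{|\nabla w|}\bigr)^2$. Choosing $f=|\nabla w|$ and multiplying by $|\nabla w|$ turns the bracket in the $a'$-piece into exactly $-|\nabla w|\,|\nabla_{L_{w,x}}|\nabla w||^2$, which matches the $a'$-term on the left-hand side. Summing the two pieces gives the first identity of the lemma. Finally, the argument for $B$ is literally the same computation with $a,A$ replaced by $b,B$, so it requires no separate treatment.

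I do not expect a genuine obstacle here: the lemma is a bookkeeping identity between quadratic forms, so the only thing to be careful about is keeping the summation index $j$ consistent (it is summed on the right-hand side) and using $\nabla w(x)\neq 0$ so that the expression $\partial_j|\nabla w|=(\nabla w\cdot\nabla w_j)/|\nabla w|$ is well-defined pointwise at $x$, which is exactly what is assumed.
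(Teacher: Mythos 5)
Your proof is correct. The paper itself does not reprove this identity --- it simply refers to Lemma 2.3 of the cited work of Farina--Sciunzi--Valdinoci for ``a simple proof'' --- and your computation is exactly that direct expansion: split $A(\nabla w)V\cdot V$ into its $a$-piece and $a'$-piece, use $\partial_j|\nabla w|=(\nabla w\cdot\nabla w_j)/|\nabla w|$ to rewrite $\sum_j(\nabla w\cdot\nabla w_j)^2=|\nabla w|^2\,|\nabla|\nabla w||^2$, and then apply the orthogonal decomposition $|\nabla f|^2=|\nabla_{L_{w,x}}f|^2+(\nabla f\cdot\nabla w/|\nabla w|)^2$ with $f=|\nabla w|$, which is precisely the argument the reference carries out.
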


Given $y\in L_{w,x} \cap \left\lbrace \nabla w \neq 0\right\rbrace$,
let $k_{1,w}(y), \ldots, k_{n-1, w}(y)$ denote the principal curvatures of $L_{w,x}$ at $y$.

By using formula $(2.1)$ of \cite{SZ1}, tangential gradients and curvatures
may be conveniently related in the following way:
\begin{equation}\label{curv}
\sum_{j=1}^{n} |\nabla w_j |^2 - \big| \nabla_{L_{w,x}} |\nabla w|\big|^2 - \big| \nabla |\nabla w|\big|^2
= |\nabla w|^2 \sum_{l=1}^{n-1} k_{l,w}^2 ,
\end{equation}
on $\left\lbrace \nabla w \neq 0\right\rbrace$,
for any $w\in C^2 \left(\left\lbrace \nabla w \neq 0\right\rbrace\right)$.

\section{Monotone solutions}\label{sec:mon}
Recalling the definition of monotone solution given in \eqref{monotonicity},
in this section we obtain some geometric inequalities.

\begin{prop}\label{prop1}
Let $\Omega\subseteq\R^n$ be open (not necessarily bounded).
Let~$(u, v)$ be a solution of~\eqref{system}, with $u,v\in C^2 \left(\Omega\right)$,
and $\nabla u, \nabla v \in W^{1,2}_{loc}\left(\Omega\right)$.
Suppose that the monotonicity condition \eqref{monotonicity} holds.

Then,
\begin{equation}\begin{split}\label{estmon}
&\int_{\Omega} \left( A\left( \nabla u(x)\right) \nabla \phi (x)\right) \cdot \nabla\phi (x) \, dx\\
   &\qquad +   \int_{\Omega\cap\mathcal{D}_{uv}} F_{11}(u,v) \phi^2 (x)   +  F_{12}(u,v) \frac{v_n}{u_n} \phi^2 (x) \, dx \geq 0,\\{\mbox{and }}
&\int_{\Omega} \left( B\left( \nabla v(x)\right) \nabla \psi (x)\right) \cdot \nabla\psi (x) \, dx\\
    &\qquad +   \int_{\Omega\cap\mathcal{D}_{uv}} F_{12}(u,v)\frac{u_n}{v_n} \psi^2 (x)   +  F_{22}(u,v) \psi^2 (x) \, dx \geq 0,
\end{split}
\end{equation}
for any locally Lipschitz functions~$\phi, \psi : \Omega\rightarrow\R$
whose supports are compact and contained in $\Omega$.
\end{prop}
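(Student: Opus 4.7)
The strategy is the classical Allegretto–Piepenbrink trick adapted to a coupled system: since monotonicity gives $u_n>0$ and $v_n<0$, I will plug $\phi^2/u_n$ and $\psi^2/v_n$ into the equations satisfied by the derivatives $(u_n,v_n)$ of the system, and use positive-definiteness of $A$ and $B$ to absorb the unwanted cross term by the pointwise Cauchy–Schwarz inequality \eqref{Axi}--\eqref{Bxi}.

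More concretely, my first step is to invoke Lemma \ref{lemma2}, applied in its local version on $\Omega$ (all its hypotheses are satisfied there by the regularity assumed in the proposition), to see that $(u_n,v_n)$ is a weak solution of the linearized system~\eqref{system1} on~$\Omega$. Combining with Lemma~\ref{lemma2bis}, the weak formulation~\eqref{system3} is valid against any test function in~$W^{1,2}_0(B)$ for any ball~$B\subset\subset\Omega$. Since $u,v\in C^2(\Omega)$ and $u_n>0$, $v_n<0$, on the compact support of a locally Lipschitz~$\phi$ (resp.~$\psi$) the function $u_n$ (resp.~$v_n$) is bounded away from zero; hence~$\phi^2/u_n$ and~$\psi^2/v_n$ are locally Lipschitz with compact support in~$\Omega$, and so are admissible test functions.

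The second step is to insert $\eta:=\phi^2/u_n$ into the first equation of~\eqref{system3} for $j=n$, obtaining
\begin{equation*}
\int_\Omega A(\nabla u)\nabla u_n\cdot\nabla(\phi^2/u_n)\,dx =
-\int_{\Omega\cap\mathcal{D}_{uv}}\left[F_{11}(u,v)\phi^2 + F_{12}(u,v)\frac{v_n}{u_n}\phi^2\right]dx.
\end{equation*}
I would then expand
$\nabla(\phi^2/u_n) = (2\phi/u_n)\nabla\phi - (\phi^2/u_n^2)\nabla u_n$
and apply~\eqref{Axi} with $V=(\phi/u_n)\nabla u_n$ and $W=\nabla\phi$, which yields the pointwise bound
\begin{equation*}
A(\nabla u)\nabla u_n\cdot\nabla(\phi^2/u_n)\;\leq\; A(\nabla u)\nabla\phi\cdot\nabla\phi
\end{equation*}
on $\Omega$. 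Integrating and combining with the previous identity delivers the first inequality in~\eqref{estmon}. The second inequality follows by the identical procedure, testing the second equation of~\eqref{system3} against~$\psi^2/v_n$ and using~\eqref{Bxi}; the sign of~$v_n$ is irrelevant because the Cauchy–Schwarz inequality holds with any choice of $V,W$.

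The main technical obstacle is really the admissibility of the test functions and the correct bookkeeping of the degenerate set. One must use that $u_n,v_n$ are continuous and do not vanish on the compact support of $\phi,\psi$ to guarantee that $\phi^2/u_n$ and $\psi^2/v_n$ lie in $W^{1,2}_0$, so that Lemma~\ref{lemma2bis} can be applied; and one must restrict the potential terms to $\mathcal{D}_{uv}$ (as is built into~\eqref{system3}) to accommodate the fact that $F_{11},F_{12},F_{22}$ are merely defined almost everywhere under the hypothesis $F\in C^{1,1}_{loc}(\R^2)$. Once these points are handled, the rest of the argument is a routine computation.
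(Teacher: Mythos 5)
Your argument is correct and coincides with the paper's own proof: both test the linearized equation~\eqref{system3} at $j=n$ with $\phi^2/u_n$ (resp.\ $\psi^2/v_n$), expand $\nabla(\phi^2/u_n)$, and discard a nonnegative quadratic form via~\eqref{Axi}--\eqref{Bxi}. The only cosmetic difference is that the paper writes the estimate by completing the square $A(\nabla u)(\nabla u_n\,\phi/u_n-\nabla\phi)\cdot(\nabla u_n\,\phi/u_n-\nabla\phi)\geq 0$, whereas you phrase the identical fact as the pointwise Cauchy--Schwarz bound $A(\nabla u)\nabla u_n\cdot\nabla(\phi^2/u_n)\leq A(\nabla u)\nabla\phi\cdot\nabla\phi$; and you are slightly more explicit that monotonicity forces $u_n,v_n$ away from zero on compact sets, which is what makes the test functions admissible.
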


\begin{proof}
By Lemma \ref{lemma2bis}, we have that $u_n$ satisfies \eqref{system3}.
We use~$\frac{\phi^2}{u_n}$ as test function in the first equality in \eqref{system3}:
\begin{eqnarray*}
&& \int_{\Omega\cap\mathcal{D}_{uv}} F_{11}(u,v) \phi^2 + F_{12}(u,v) \frac{v_n}{u_n} \phi^2 \, dx \\
&=&
- \int_{\Omega} \left( A\left( \nabla u \right) \nabla u_n\right)  \cdot \nabla\left(\frac{\phi^2}{u_n}\right)\, dx \\
&=& - \int_{\Omega}\left(  A\left( \nabla u \right) \nabla u_n \right) \cdot
           \left( \frac{2\phi\nabla\phi\, u_n - \phi^2 \nabla u_n}{u_n^2}\right)  \, dx \\
&=& \int_{\Omega} \left( A\left( \nabla u \right) \nabla u_n\right) \cdot \nabla u_n  \frac{\phi^2}{u_n^2}
          -2     \left( A\left( \nabla u \right) \nabla u_n\right) \cdot \nabla\phi   \frac{\phi}{u_n} \, dx \\
&&\qquad +   \int_{\Omega}  \left( A\left( \nabla u \right) \nabla\phi\right) \cdot \nabla\phi  -
 \left( A\left( \nabla u \right) \nabla \phi\right) \cdot \nabla\phi \, dx
 \\
&=&   \int_{\Omega}  A\left( \nabla u \right)\left( \nabla u_n \frac{\phi}{u_n} - \nabla\phi\right)  \cdot
          \left( \nabla u_n \frac{\phi}{u_n} - \nabla\phi\right)   - \left(A\left( \nabla u \right) \nabla \phi\right) \cdot \nabla\phi \, dx \\
&\geq&  - \int_{\Omega} \left(A\left( \nabla u \right) \nabla \phi\right) \cdot \nabla\phi \, dx,
\end{eqnarray*}
since \eqref{Axi} holds.
This implies the first inequality in~\eqref{estmon}.

Using~$\frac{\psi^2}{v_n}$ as test function in the second equality in \eqref{system3},
and reasoning as above,
we obtain also the second inequality in~\eqref{estmon}.
\end{proof}

In the subsequents Theorem \ref{T1} and Corollary \ref{cor1} we obtain some inequalities
which involve the principal curvature of the level sets and the tangential gradient of the solution.

\begin{theorem}\label{T1}
Let $\Omega\subseteq\R^n$ be open (not necessarily bounded).
Let~$(u, v)$ be a weak solution of~\eqref{system}, with
$u, v\in C^2 (\Omega)$,
and $\nabla u, \nabla v \in W^{1,2}_{loc}\left(\Omega\right)$.
Suppose that the monotonicity condition \eqref{monotonicity} holds.

For any $x\in\Omega$ let $L_{u,x}$ and $L_{v,x}$ denote the level set of $u$ and $v$ respectively at $x$,
according to \eqref{levelset}.

Let also $\lambda_1 (|\xi |), \lambda_2 (|\xi |), \gamma_1 (|\xi |), \gamma_2 (|\xi |)$
be as in \eqref{lambda} and \eqref{gamma}.

Then,
\begin{equation}\begin{split}\label{u}
&  \int_{\Omega}  \left[ \lambda_1 \left( |\nabla u(x)|\right)  \big| \nabla_{L_{u,x}} |\nabla u| (x)\big|^2
     + \lambda_2 \left( |\nabla u(x)|\right) |\nabla u(x)|^2 \sum_{l=1}^{n-1} k_{l,u}^2 \right]  \phi^2(x) \, dx \\
  &   \leq  \int_{\Omega} |\nabla u(x)|^2 \left( A\left( \nabla u(x)\right) \nabla\phi (x) \right) \cdot \nabla\phi (x)  \,dx
     \\
    &\qquad + \int_{\Omega} F_{12}(u,v) \left( \frac{v_n}{u_n} |\nabla u(x)|^2 - \nabla u (x) \cdot \nabla v(x) \right) \phi^2 (x) \, dx, \end{split}
\end{equation}
and
\begin{equation}\begin{split}\label{v}
 & \int_{\Omega}  \left[ \gamma_1 \left( |\nabla v(x)|\right)  \big| \nabla_{L_{v,x}} |\nabla v| (x)\big|^2
     + \gamma_2 \left( |\nabla v(x)|\right) |\nabla v(x)|^2 \sum_{l=1}^{n-1} k_{l,v}^2 \right]  \psi^2(x) \, dx \\
   &  \leq  \int_{\Omega} |\nabla v(x)|^2 \left( B\left( \nabla v(x)\right) \nabla\psi (x) \right) \cdot \nabla\psi (x)  \, dx
     \\
    &\qquad + \int_{\Omega} F_{12}(u,v) \left( \frac{u_n}{v_n} |\nabla v(x)|^2 - \nabla u (x) \cdot \nabla v(x) \right) \psi^2 (x) \, dx, \end{split}
\end{equation}
for any locally Lipschitz functions $\phi, \psi :\Omega\rightarrow\R$
whose supports are compact and contained in $\Omega$.
\end{theorem}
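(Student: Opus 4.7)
My plan is to prove \eqref{u} by combining three ingredients: the first inequality of Proposition~\ref{prop1} tested against $|\nabla u|\phi$; the linearized equation \eqref{system3} for $u_j$ tested against $u_j\phi^2$ and summed over $j$; and the pointwise algebraic identities supplied by Lemma~\ref{lemma3} together with formula~\eqref{curv}. The estimate \eqref{v} will follow by the completely symmetric argument with $(u,\phi)$ replaced by $(v,\psi)$, so it suffices to detail \eqref{u}.

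First, I test the first equation in \eqref{system3} against $u_j\phi^2$, which is admissible in the sense of Lemma~\ref{lemma2bis} since $u\in C^2(\Omega)$ and $\phi$ is Lipschitz with compact support in $\Omega$. Summing over $j=1,\dots,n$ and using the elementary identities $\sum_j u_j\nabla u_j = |\nabla u|\nabla|\nabla u|$, $\sum_j u_j^2 = |\nabla u|^2$, and $\sum_j u_j v_j = \nabla u\cdot\nabla v$, I obtain
\begin{equation*}
\sum_{j=1}^n\int_\Omega \phi^2\, A(\nabla u)\nabla u_j\cdot\nabla u_j\,dx + 2\int_\Omega |\nabla u|\phi\,A(\nabla u)\nabla|\nabla u|\cdot\nabla\phi\,dx + \int_{\Omega\cap\mathcal{D}_{uv}}\bigl(F_{11}|\nabla u|^2 + F_{12}\,\nabla u\cdot\nabla v\bigr)\phi^2\,dx = 0.
\end{equation*}

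Next, I apply the first inequality of Proposition~\ref{prop1} to the Lipschitz compactly supported test function $|\nabla u|\phi$; expanding $\nabla(|\nabla u|\phi)=\phi\nabla|\nabla u|+|\nabla u|\nabla\phi$ inside the quadratic form produces a mixed term identical to the one in the identity above, and the $F_{11}|\nabla u|^2\phi^2$ contribution matches as well. Subtracting the identity from the inequality cancels both of these, yielding
\begin{equation*}
\int_\Omega \phi^2\Bigl[\sum_{j=1}^n A(\nabla u)\nabla u_j\cdot\nabla u_j - A(\nabla u)\nabla|\nabla u|\cdot\nabla|\nabla u|\Bigr]dx \leq \int_\Omega |\nabla u|^2\, A(\nabla u)\nabla\phi\cdot\nabla\phi\,dx + \int_{\Omega\cap\mathcal{D}_{uv}} F_{12}\bigl(\tfrac{v_n}{u_n}|\nabla u|^2 - \nabla u\cdot\nabla v\bigr)\phi^2\,dx.
\end{equation*}

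Finally, on the open set $\{\nabla u\neq 0\}$ I invoke Lemma~\ref{lemma3} (with the sum over $j$ understood on both sides, as the left-hand side of its statement shows) together with \eqref{curv} applied to $w=u$. A short computation rewrites the bracketed integrand as $\lambda_1(|\nabla u|)\,\bigl|\nabla_{L_{u,x}}|\nabla u|\bigr|^2 + \lambda_2(|\nabla u|)\,|\nabla u|^2\sum_{l=1}^{n-1}k_{l,u}^2$, exactly as desired. On $\{\nabla u = 0\}$, the Stampacchia argument used in the proof of Lemma~\ref{lemma2} forces $\nabla u_j = 0$ and $\nabla|\nabla u| = 0$ almost everywhere, so the bracket vanishes and is again consistent with the target integrand (interpreted as zero on the null set). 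The only delicate points I anticipate are bookkeeping ones: verifying admissibility of the two test functions under Lemma~\ref{lemma2bis}, the consistent restriction of the $F_{ij}$ integrals to $\mathcal{D}_{uv}$, and the treatment of $\{\nabla u = 0\}$ when hypothesis (A2) is in force rather than $\{\nabla u=0\}=\varnothing$; the algebraic collapse to the $\lambda_1,\lambda_2$ form is a one-line consequence of the explicit structure of $A_{hk}$ and of \eqref{curv}.
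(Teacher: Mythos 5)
Your proposal is correct and follows essentially the same route as the paper: test \eqref{estmon} with $|\nabla u|\phi$, test \eqref{system3} with $u_j\phi^2$ and sum over $j$, subtract to cancel the mixed and $F_{11}$ terms, then convert the remaining difference of quadratic forms via Lemma~\ref{lemma3} and formula~\eqref{curv}; the paper handles the final passage from $\int_{\Omega\cap\mathcal{D}_{uv}}$ to $\int_\Omega$ for the $F_{12}$ term by the Stampacchia observation that $\nabla u=\nabla v=0$ a.e.\ on $\mathcal{N}_{uv}$, exactly the bookkeeping point you flagged.
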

\begin{proof} We prove first \eqref{u}.
By using the first inequality in \eqref{estmon} with $|\nabla u| \phi$ as test function, we have that
\begin{eqnarray}\label{est1}
0 &\leq&  \int_{\Omega} \left( A\left( \nabla u(x)\right) \nabla\left( |\nabla u(x)| \phi(x)\right) \right)
     \cdot \nabla\left( |\nabla u(x)| \phi(x)\right) \, dx   \nonumber\\
&& +  \int_{\Omega\cap\mathcal{D}_{uv}} F_{11}(u,v) |\nabla u(x)|^2 \phi^2 (x)   +  F_{12}(u,v) \frac{v_n}{u_n} |\nabla u(x)|^2 \phi^2 (x) \, dx \nonumber\\
&=&   \int_{\Omega}  \phi^2 (x) \left( A\left( \nabla u(x)\right) \nabla\left( |\nabla u(x)|\right) \right)\cdot \nabla\left( |\nabla u(x)|\right) \nonumber
\\&& +  |\nabla u(x)|^2  \left( A\left( \nabla u(x)\right) \nabla\phi (x)\right)  \cdot\nabla\phi (x) \nonumber\\
&& + \frac{1}{2} \left( A\left( \nabla u(x)\right) \nabla\left( \phi^2 (x)\right) \right)
\cdot \nabla\left( |\nabla u(x)|^2\right)  \, dx \nonumber\\
&& +   \int_{\Omega\cap\mathcal{D}_{uv}} F_{11}(u,v) |\nabla u(x)|^2 \phi^2 (x)   +  F_{12}(u,v) \frac{v_n}{u_n} |\nabla u(x)|^2 \phi^2 (x)dx.
\end{eqnarray}
Now, since Lemma \ref{lemma2bis} holds, we can use~$u_j \phi^2$ as test function in the first
equality in \eqref{system3}:
\begin{eqnarray*}
    && \int_{\Omega\cap\mathcal{D}_{uv}} F_{11}(u,v) u_j^2 (x) \phi^2 (x) + F_{12}(u,v) u_j (x) v_j (x) \phi^2 (x) \, dx \\
    &=& - \int_{\Omega} \left( A\left( \nabla u(x)\right) \nabla u_j (x)\right)
              \cdot \nabla\left( u_j (x) \phi^2 (x)\right) \, dx\\
    &=&  - \int_{\Omega} \phi^2 (x)  \left( A\left( \nabla u(x)\right) \nabla u_j (x)\right) \cdot \nabla u_j (x) \\
      &&\qquad  +   \frac{1}{2} \left( A\left( \nabla u(x)\right) \nabla \left( \phi^2 (x)\right) \right)
            \cdot \nabla\left( u_j^2 (x) \right) \, dx .
\end{eqnarray*}
We sum over $j$ and use \eqref{est1} to see that
\begin{eqnarray*}
&&  \int_{\Omega} \phi^2 (x) \sum_{j=1}^n \left( A\left( \nabla u(x)\right) \nabla u_j (x)\right) \cdot \nabla u_j (x) \\
   &&\quad  +   \frac{1}{2} \left( A\left( \nabla u(x)\right) \nabla \left( \phi^2 (x)\right) \right)
            \cdot \nabla\left( |\nabla u|^2 (x) \right) \, dx \\
    &  =&  - \int_{\Omega\cap\mathcal{D}_{uv}} F_{11}(u,v) |\nabla u(x)|^2 \phi^2 (x) + F_{12}(u,v) \nabla u(x)\cdot \nabla v(x) \phi^2 (x) \, dx \\
     &\leq&  \int_{\Omega}  \phi^2 (x) \left( A\left( \nabla u(x)\right) \nabla\left( |\nabla u(x)|\right) \right)
     \cdot \nabla\left( |\nabla u(x)|\right) \\
     &&\quad+  |\nabla u(x)|^2  \left( A\left( \nabla u(x)\right) \nabla\phi (x)\right)  \cdot \nabla\phi (x) \\
     &&\quad+ \frac{1}{2} \left( A\left( \nabla u(x)\right) \nabla\left( \phi^2 (x)\right) \right)
     \cdot \nabla\left( |\nabla u(x)|^2\right)  \, dx \\
    &&\quad+ \int_{\Omega\cap\mathcal{D}_{uv}} F_{11}(u,v) |\nabla u(x)|^2 \phi^2 (x)   +  F_{12}(u,v) \frac{v_n}{u_n} |\nabla u(x)|^2 \phi^2 (x) \, dx\\
     &&\quad - \int_{\Omega\cap\mathcal{D}_{uv}} F_{11}(u,v) |\nabla u(x)|^2 \phi^2 (x) + F_{12}(u,v) \nabla u(x)\cdot \nabla v(x) \phi^2 (x) \, dx \\
     \end{eqnarray*}
     \begin{eqnarray*}
    &=&  \int_{\Omega} \phi^2 (x) \left( A\left( \nabla u(x)\right) \nabla\left( |\nabla u(x)|\right) \right)
     \cdot \nabla\left( |\nabla u(x)|\right) \\
     &&\quad+  |\nabla u(x)|^2  \left( A\left( \nabla u(x)\right) \nabla\phi (x)\right)  \cdot \nabla\phi (x) \\
    &&\quad + \frac{1}{2} \left( A\left( \nabla u(x)\right) \nabla\left( \phi^2 (x)\right) \right)
     \cdot \nabla\left( |\nabla u(x)|^2\right)  \, dx \\
    &&\quad +   \int_{\Omega\cap\mathcal{D}_{uv}} F_{12}(u,v)  \left( \frac{v_n}{u_n} |\nabla u(x)|^2 - \nabla u(x)\cdot\nabla v(x)\right) \phi^2 (x)\, dx.
\end{eqnarray*}
By Lemma \ref{lemma3}, we have that
\begin{eqnarray*}
 && \int_{\Omega} \phi^2 (x) a' \left( |\nabla u(x)|\right) |\nabla u(x)| \big| \nabla_{L_{u,x}} |\nabla u(x)|\big|^2  \\
  &&\qquad -   \phi^2 (x)  a\left( |\nabla u(x)|\right) \left[ \big|\nabla |\nabla u(x)|\big|^2
           - \sum_{j=1}^n |\nabla u_j (x)|^2 \right] \, dx \\
    &\leq &  \int_{\Omega}  |\nabla u(x)|^2  \left( A\left( \nabla u(x)\right) \nabla\phi (x)\right)  \cdot \nabla\phi (x) \, dx\\
    &&\qquad +  \int_{\Omega\cap\mathcal{D}_{uv}} F_{12}(u,v)  \left( \frac{v_n}{u_n} |\nabla u(x)|^2 - \nabla u(x)\cdot\nabla v(x)\right) \phi^2 (x)\, dx.
\end{eqnarray*}
That is, by using \eqref{lambda},
\begin{eqnarray*}
&&   \int_{\Omega}  \phi^2 (x) \lambda_1 (|\nabla u(x)|) \big| \nabla_{L_{u,x}} |\nabla u(x)|\big|^2 \\
    &&  + \phi^2 (x) \lambda_2 (|\nabla u(x)|) \left( \sum_{j=1}^n |\nabla u_j (x)|^2 -
      \big| \nabla_{L_{u,x}} |\nabla u(x)|\big|^2 - \big|\nabla |\nabla u(x)|\big|^2 \right) \, dx\\
    &\leq&  \int_{\Omega}  |\nabla u(x)|^2  \left( A\left( \nabla u(x)\right) \nabla\phi (x)\right)  \cdot \nabla\phi (x) \, dx\\
     &&+  \int_{\Omega\cap\mathcal{D}_{uv}} F_{12}(u,v)  \left( \frac{v_n}{u_n} |\nabla u(x)|^2 - \nabla u(x)\cdot\nabla v(x)\right) \phi^2 (x)\, dx.
\end{eqnarray*}
Notice that \eqref{borel} and Theorem 6.19 of \cite{LL} give that
$$
  \nabla u =0= \nabla v \mathrm{\ almost\ everywhere\ on\ } \mathcal{N}_{uv},
$$
and therefore
\begin{eqnarray*}
  &&\int_{\Omega\cap\mathcal{D}_{uv}} F_{12}(u,v)  \left( \frac{v_n}{u_n} |\nabla u(x)|^2 - \nabla u(x)\cdot\nabla v(x)\right) \phi^2 (x)\, dx  \\
&=& \int_{\Omega} F_{12}(u,v)  \left( \frac{v_n}{u_n} |\nabla u(x)|^2 - \nabla u(x)\cdot\nabla v(x)\right) \phi^2 (x)\, dx.
\end{eqnarray*}
This and \eqref{curv} imply the desired result.

Arguing in a similar way we obtain also \eqref{v}.
\end{proof}

\begin{corollary}\label{cor1}
Let $\Omega\subseteq\R^n$ be open (not necessarily bounded).
Let~$(u, v)$ be a weak solution of~\eqref{system}, with
$u, v\in C^2 (\Omega)$,
and $\nabla u, \nabla v \in W^{1,2}_{loc}\left(\Omega\right)$.
Suppose that the monotonicity condition \eqref{monotonicity} holds and that $F_{12}(u,v)\geq 0$.

For any $x\in\Omega$ let $L_{u,x}$ and $L_{v,x}$ denote the level set of $u$ and $v$ respectively at $x$,
according to \eqref{levelset}.

Let also $\lambda_1 (|\xi |), \lambda_2 (|\xi |), \gamma_1 (|\xi |), \gamma_2 (|\xi |)$
be as in \eqref{lambda} and \eqref{gamma}.

Then,
\begin{equation}\begin{split}\nonumber
  & \int_{\Omega}  \left[ \lambda_1 \left( |\nabla u(x)|\right)  \big| \nabla_{L_{u,x}} |\nabla u| (x)\big|^2
     + \lambda_2 \left( |\nabla u(x)|\right) |\nabla u(x)|^2 \sum_{l=1}^{n-1} k_{l,u}^2 \right]  \varphi^2(x) \, dx \\
   &\qquad  +   \int_{\Omega}  \left[ \gamma_1 \left( |\nabla v(x)|\right)  \big| \nabla_{L_{v,x}} |\nabla v| (x)\big|^2
     + \gamma_2 \left( |\nabla v(x)|\right) |\nabla v(x)|^2 \sum_{l=1}^{n-1} k_{l,v}^2 \right]  \varphi^2(x) \, dx \\
    & \leq  \int_{\Omega} |\nabla u(x)|^2 \left( A\left( \nabla u(x)\right) \nabla\varphi (x) \right) \cdot \nabla\varphi (x) \, dx\\
  &\qquad         + \int_{\Omega} |\nabla v(x)|^2 \left( B\left( \nabla v(x)\right) \nabla\varphi (x) \right) \cdot \nabla\varphi (x)  \, dx, \end{split}
\end{equation}
for any locally Lipschitz function $\varphi :\Omega\rightarrow\R$ whose support is
compact and contained in $\Omega$.
\end{corollary}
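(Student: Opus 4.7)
The plan is to apply Theorem \ref{T1} with the same test function $\phi=\psi=\varphi$ in both \eqref{u} and \eqref{v}, sum the two resulting inequalities, and then show that the combined $F_{12}$-terms on the right-hand side are non-positive, which will yield the stated bound.

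More precisely, setting $\phi=\psi=\varphi$ and adding \eqref{u} to \eqref{v}, the left-hand side becomes exactly the quantity appearing on the left of the desired inequality, while the right-hand side contains, in addition to the two ``good'' gradient terms
$$\int_{\Omega} |\nabla u|^2 \bigl(A(\nabla u)\nabla\varphi\bigr)\cdot\nabla\varphi\,dx + \int_{\Omega} |\nabla v|^2 \bigl(B(\nabla v)\nabla\varphi\bigr)\cdot\nabla\varphi\,dx,$$
the cross term
$$\int_{\Omega} F_{12}(u,v)\left[\frac{v_n}{u_n}|\nabla u|^2 + \frac{u_n}{v_n}|\nabla v|^2 - 2\,\nabla u\cdot\nabla v\right]\varphi^2\,dx.$$
The key algebraic identity is to put everything on a common denominator $u_n v_n$ and recognize a perfect square:
$$\frac{v_n}{u_n}|\nabla u|^2 + \frac{u_n}{v_n}|\nabla v|^2 - 2\,\nabla u\cdot\nabla v \;=\; \frac{\bigl|\,v_n\nabla u - u_n\nabla v\,\bigr|^2}{u_n v_n}.$$

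Now the monotonicity condition \eqref{monotonicity} gives $u_n>0$ and $v_n<0$, hence $u_n v_n<0$; combined with the hypothesis $F_{12}(u,v)\geq 0$, the whole cross integrand is non-positive, so the cross term can be dropped when passing to an upper bound. This produces precisely the inequality stated in the corollary.

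The only point that requires some care is that the identity above is formally singular on the set $\{u_n v_n=0\}$, but by the monotonicity condition $u_n>0$ everywhere and $v_n<0$ everywhere, so no such set exists and the computation is entirely valid pointwise on $\Omega$. There is no serious obstacle: everything reduces to recognising the perfect square and invoking the sign information provided by \eqref{monotonicity} together with the sign of $F_{12}(u,v)$.
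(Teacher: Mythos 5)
Your proof is correct and follows essentially the same route as the paper: sum \eqref{u} and \eqref{v} with $\phi=\psi=\varphi$, recognize the cross term as a perfect square, and drop it using the sign of $F_{12}$ and the monotonicity condition. The only difference is cosmetic—the paper writes the square as $\bigl|\sqrt{-v_n/u_n}\,\nabla u + \sqrt{u_n/(-v_n)}\,\nabla v\bigr|^2$ with an explicit minus sign in front, whereas you put everything over the common denominator $u_n v_n<0$; the two forms are algebraically identical.
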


\begin{proof}
By summing up the inequalities in \eqref{u} and \eqref{v}, we have that, for any $\varphi$ as in the corollary,
\begin{eqnarray*}
  && \int_{\Omega}  \left[ \lambda_1 \left( |\nabla u(x)|\right)  \big| \nabla_{L_{u,x}} |\nabla u| (x)\big|^2
     + \lambda_2 \left( |\nabla u(x)|\right) |\nabla u(x)|^2 \sum_{l=1}^{n-1} k_{l,u}^2 \right]  \varphi^2(x) \, dx \\
    && +   \int_{\Omega}  \left[ \gamma_1 \left( |\nabla
v(x)|\right)  \big| \nabla_{L_{v,x}} |\nabla v| (x)\big|^2
     + \gamma_2 \left( |\nabla v(x)|\right) |\nabla v(x)|^2 \sum_{l=1}^{n-1} k_{l,v}^2 \right]  \varphi^2(x) \, dx \\
     &\leq& \int_{\Omega} |\nabla u(x)|^2 \left( A\left( \nabla u(x)\right) \nabla\varphi (x) \right) \cdot \nabla\varphi (x)  \,dx \\
    && + \int_{\Omega} |\nabla v(x)|^2 \left( B\left( \nabla
v(x)\right) \nabla\varphi (x) \right) \cdot \nabla\varphi (x)  \, dx \\
    && + \int_{\Omega} F_{12}(u,v) \left( \frac{v_n}{u_n} |\nabla
u(x)|^2- 2 \nabla u (x) \cdot \nabla v(x)  +
     \frac{u_n}{v_n} |\nabla v(x)|^2 \right) \varphi^2 (x) \, dx \\
    &=&   \int_{\Omega} |\nabla u(x)|^2 \left( A\left( \nabla u(x)\right) \nabla\varphi (x) \right) \cdot \nabla\varphi (x)  \,dx \\
     && + \int_{\Omega} |\nabla v(x)|^2 \left( B\left( \nabla
v(x)\right) \nabla\varphi (x) \right) \cdot \nabla\varphi (x)  \, dx \\
    && - \int_{\Omega} F_{12}(u,v) \left( \frac{-v_n}{u_n}
|\nabla u(x)|^2 + 2 \nabla u (x) \cdot \nabla v(x)  +
     \frac{u_n}{-v_n} |\nabla v(x)|^2 \right) \varphi^2 (x) \, dx \\
   &=& \int_{\Omega} |\nabla u(x)|^2 \left( A\left( \nabla u(x)\right) \nabla\varphi (x) \right) \cdot \nabla\varphi (x)  \,dx \\
    && + \int_{\Omega} |\nabla v(x)|^2 \left( B\left( \nabla
v(x)\right) \nabla\varphi (x) \right) \cdot \nabla\varphi (x)  \, dx \\
   &&  - \int_{\Omega} F_{12}(u,v) \left|\sqrt{\frac{-v_n}{u_n} }
\nabla u(x) +
   \sqrt{  \frac{u_n}{-v_n}} \nabla v(x) \right|^2  \varphi^2 (x) \, dx ,
\end{eqnarray*}
which gives the conclusion, since $F_{12}(u,v)\geq 0$.
\end{proof}

\section{Stable solutions}\label{sec:stable}
In this section we obtain some geometric inequalities for stable solutions of \eqref{system}.
Since we will use the stability condition~\eqref{stable1}
with a less regular test functions, we need to state the following:
\begin{lemma}\label{lemma4}
Let~$(u,v)$ be a stable weak solution of~\eqref{system}
such that $u,v\in C^1 (\R^n)$.
Suppose that either (A2) holds or that $\left\lbrace \nabla u = 0 \right\rbrace =\varnothing$,
and that either (B2) holds or that $\left\lbrace \nabla v = 0 \right\rbrace =\varnothing$.
Then, the stability condition~\eqref{stable1} holds for any~$\phi, \psi\in W^{1,2}_0 (B)$,
and any ball~$B\subset\R^n$.
\end{lemma}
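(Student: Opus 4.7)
The plan is to proceed exactly as in the proof of Lemma~\ref{lemma2bis}, by density of $C^\infty_0(B)$ in $W^{1,2}_0(B)$. Given $\phi,\psi \in W^{1,2}_0(B)$, I pick sequences $\phi_k,\psi_k \in C^\infty_0(B)$ converging to $\phi,\psi$ respectively in $W^{1,2}(B)$, apply the stability condition~\eqref{stable1} to the pair $(\phi_k,\psi_k)$, and pass to the limit in each of the five integrals appearing there.

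The crucial point is uniform boundedness of the coefficients on $\overline B$. Since $u,v\in C^1(\R^n)$ and $\overline B$ is compact, $|\nabla u|$ and $|\nabla v|$ are bounded on $\overline B$; hence, exactly as in the proof of Lemma~\ref{lemma2bis}, the quantities $K_A:=\sup_{\overline B}|A(\nabla u)|$ and $K_B:=\sup_{\overline B}|B(\nabla v)|$ are finite (indeed $m_u:=\min_{\overline B}|\nabla u|>0$ when $\{\nabla u=0\}=\varnothing$, while under~(A2) one has $A\in L^\infty_{loc}(\R^n)$, and symmetrically for $B$). Moreover, since $F\in C^{1,1}_{loc}(\R^2)$, the derivatives $F_{11},F_{12},F_{22}$ exist a.e.\ and are essentially bounded on the compact set $(u,v)(\overline B)\subset\R^2$; consequently $F_{ij}(u(x),v(x))$ is essentially bounded on $\overline B\cap\mathcal{D}_{uv}$.

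Next I would control each difference $J_k-J$, where $J$ denotes a term of~\eqref{stable1} associated to $(\phi,\psi)$ and $J_k$ the corresponding term for $(\phi_k,\psi_k)$, by Cauchy--Schwarz. For the two quadratic gradient terms one obtains bounds of the form $K_A\,\|\nabla(\phi_k-\phi)\|_{L^2(B)}\bigl(\|\nabla\phi_k\|_{L^2(B)}+\|\nabla\phi\|_{L^2(B)}\bigr)$ and analogously in $\psi$; for the zero-order terms involving $F_{11}\phi^2$ and $F_{22}\psi^2$ one gets bounds of the form $\|F_{11}\|_{L^\infty(\overline B\cap\mathcal{D}_{uv})}\,\|\phi_k-\phi\|_{L^2(B)}\bigl(\|\phi_k\|_{L^2(B)}+\|\phi\|_{L^2(B)}\bigr)$; and for the cross term one writes $\phi_k\psi_k-\phi\psi=(\phi_k-\phi)\psi_k+\phi(\psi_k-\psi)$ and estimates accordingly. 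All such quantities vanish as $k\to\infty$ by the $W^{1,2}$-convergence of $\phi_k,\psi_k$, so inequality~\eqref{stable1} for $(\phi,\psi)$ follows by passing to the limit.

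The proof presents no essential obstacle: it is a standard density argument, and the only subtlety --- already handled in Lemma~\ref{lemma2bis} --- is ensuring that the matrix coefficients $A(\nabla u)$ and $B(\nabla v)$ are bounded on $\overline B$ even when $\{\nabla u=0\}$ or $\{\nabla v=0\}$ is nonempty, which is exactly what the alternative assumption~(A2)/(B2) guarantees.
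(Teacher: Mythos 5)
Your proof is correct and follows essentially the same route as the paper's: a density argument approximating $\phi,\psi$ by smooth compactly supported functions, with the boundedness of $A(\nabla u)$ and $B(\nabla v)$ on $\overline B$ guaranteed either by $\{\nabla u=0\}=\varnothing$ (so $m_u>0$) or by (A2), and a term-by-term passage to the limit via Cauchy--Schwarz. The only cosmetic difference is that you justify the convergence of the zero-order terms $F_{11}\phi_k^2$ and $F_{22}\psi_k^2$ explicitly through the essential boundedness of $F_{ij}$ on the compact image $(u,v)(\overline B)$, whereas the paper simply asserts their convergence and spells out the estimate only for the cross term $F_{12}\phi_k\psi_k$; your added detail is accurate and harmless.
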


\begin{proof}
As in the proof of Lemma~\ref{lemma2bis}, we introduce~$m_u, M_u, m_v, M_v, K_A, K_B$, and notice that, under the hypotheses of Lemma \ref{lemma4},~$K_A, K_B <+\infty$.
Moreover, given~$\phi, \psi\in W^{1,2}_0 (B)$, we consider two sequences~$\phi_k, \psi_k \in C^{\infty}_0 (B)$ which converge to~$\phi, \psi$ respectively in~$W^{1,2}_0 (B)$.

Therefore,
\begin{eqnarray*}
 && \left|\int_{\R^n} \left( A\left( \nabla u\right) \nabla \phi_k \right) \cdot \nabla\phi_k \, dx
 - \int_{\R^n} \left( A\left( \nabla u\right) \nabla \phi \right) \cdot \nabla\phi \, dx\right| \\
&\leq & \int_B |A(\nabla u)|\, |\nabla (\phi_k -\phi)|\, |\nabla\phi_k| + |A(\nabla u)|\, |\nabla\phi|\, |\nabla (\phi_k -\phi)| \, dx \\
&\leq & K_A \left(\int_B |\nabla(\phi_k -\phi)|^2 dx\right)^{1/2} \left[ \left(\int_B |\nabla\phi_k|^2 dx\right)^{1/2} + \left(\int_B |\nabla\phi|^2 dx\right)^{1/2}\right],
\end{eqnarray*}
which tends to zero as~$k$ tends to infinity.

Similarly, one obtains
\begin{eqnarray*}
 && \left|\int_{\R^n} \left( B\left( \nabla v\right) \nabla \psi_k \right) \cdot \nabla\psi_k \, dx
 - \int_{\R^n} \left( B\left( \nabla v\right) \nabla \psi \right) \cdot \nabla\psi \, dx \right| \\
&\leq & K_B \left(\int_B |\nabla(\psi_k -\psi)|^2 dx\right)^{1/2} \left[ \left(\int_B |\nabla\psi_k|^2 dx\right)^{1/2} + \left(\int_B |\nabla\psi|^2 dx\right)^{1/2}\right],
\end{eqnarray*}
which again tends to zero.

Moreover, one has that, as~$k$ tends to infinity,
$$
 \int_{\mathcal{D}_{uv}} F_{11}(u,v)\phi_k^2 \, dx \rightarrow \int_{\mathcal{D}_{uv}} F_{11}(u,v)\phi^2 \, dx,
$$
and
$$
  \int_{\mathcal{D}_{uv}} F_{22}(u,v) \psi_k^2 \, dx \rightarrow \int_{\mathcal{D}_{uv}} F_{22}(u,v) \psi^2 \, dx.
$$

Finally,
\begin{eqnarray*}
 && \left|\int_{\mathcal{D}_{uv}} F_{12}(u,v) \phi_k \, \psi_k \, dx -\int_{\mathcal{D}_{uv}} F_{12}(u,v) \phi \, \psi \, dx \right| \\
 &\leq & C \int_{B} |\phi_k \, \psi_k - \phi \, \psi | dx \leq C \int_{B} |\phi_k|\, |\psi_k -\psi| + |\psi| \, |\phi_k -\phi| dx \\
 &\leq & C \left(\int_{B}|\phi_k|^2 dx\right)^{1/2} \left(\int_{B}|\psi_k -\psi|^2 dx\right)^{1/2} \\
 && \qquad + \left(\int_{B}|\psi|^2 dx\right)^{1/2} \left(\int_{B}|\phi_k -\phi|^2 dx\right)^{1/2},
\end{eqnarray*}
which converges to zero as~$k$ tends to infinity.
This concludes the proof.
\end{proof}

We prove next that, under suitable assumptions, a monotone solution of \eqref{system} is also stable.
\begin{prop} \label{prop2}
Let~$(u, v)$ be a weak solution of~\eqref{system}, with
$u, v\in C^2 (\R^n)$,
and $\nabla u, \nabla v \in W^{1,2}_{loc}\left(\R^n\right)$.
Suppose that the monotonicity condition \eqref{monotonicity} holds,
and that $F_{12}(u,v)\geq 0$.
Then~$(u, v)$ is a stable solution.
\end{prop}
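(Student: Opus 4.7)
The plan is to deduce stability directly from the two monotone inequalities already established in Proposition \ref{prop1}, exploiting the sign information $F_{12}(u,v)\geq 0$ together with $u_n>0$ and $v_n<0$ via a completing-the-square argument.

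More precisely, fix arbitrary test functions $\phi,\psi\in C^{\infty}_0(\R^n)$; these are locally Lipschitz with compact support, so Proposition \ref{prop1} applies (taking $\Omega=\R^n$). Adding the two inequalities in \eqref{estmon}, I obtain
\begin{equation*}
\begin{split}
 & \int_{\R^n} (A(\nabla u)\nabla\phi)\cdot\nabla\phi + (B(\nabla v)\nabla\psi)\cdot\nabla\psi \, dx \\
 &\qquad + \int_{\mathcal{D}_{uv}} F_{11}(u,v)\phi^2 + F_{22}(u,v)\psi^2 + F_{12}(u,v)\left(\frac{v_n}{u_n}\phi^2 + \frac{u_n}{v_n}\psi^2\right)dx \geq 0.
\end{split}
\end{equation*}
The stability inequality \eqref{stable1} that I want has the very same Dirichlet-type terms and the same $F_{11}\phi^2+F_{22}\psi^2$ contribution, but with $2F_{12}\phi\psi$ in place of the mixed $v_n/u_n$ expression. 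So the entire proof boils down to showing the pointwise bound
\begin{equation*}
F_{12}(u,v)\left(\frac{v_n}{u_n}\phi^2 + \frac{u_n}{v_n}\psi^2\right) \leq 2F_{12}(u,v)\phi\psi \qquad \text{on }\mathcal{D}_{uv}.
\end{equation*}

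For this, I use that $u_n>0$ and $v_n<0$, so $-u_n v_n>0$. A direct computation gives the identity
\begin{equation*}
2\phi\psi - \frac{v_n}{u_n}\phi^2 - \frac{u_n}{v_n}\psi^2 = \frac{(u_n\psi - v_n\phi)^2}{-u_n v_n} \geq 0,
\end{equation*}
and multiplying by the nonnegative factor $F_{12}(u,v)$ (which holds on all of $\Im(u,v)$, hence pointwise on $\mathcal{D}_{uv}$) gives the desired inequality. Combining this with the summed Proposition \ref{prop1} estimate yields \eqref{stable1} for $C^{\infty}_0$ test functions, which is exactly the stability condition.

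There is no real obstacle; the only thing to watch is keeping the sign bookkeeping straight (the minus signs from $v_n<0$ are what make the square identity have the correct positive denominator). Once the algebraic identity is in place the result is immediate.
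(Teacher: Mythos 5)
Your proof is correct and follows essentially the same route as the paper: sum the two inequalities of Proposition \ref{prop1}, then compare the mixed term $F_{12}\left(\frac{v_n}{u_n}\phi^2+\frac{u_n}{v_n}\psi^2\right)$ with $2F_{12}\phi\psi$ via a perfect-square argument that exploits $u_n>0$, $v_n<0$ and $F_{12}\geq 0$. Your identity $2\phi\psi-\frac{v_n}{u_n}\phi^2-\frac{u_n}{v_n}\psi^2=\frac{(u_n\psi-v_n\phi)^2}{-u_nv_n}$ is algebraically the same square the paper writes as $\bigl(\sqrt{-v_n/u_n}\,\phi+\sqrt{u_n/(-v_n)}\,\psi\bigr)^2$, just cleared of radicals.
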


\begin{proof}
By summing up the inequalities in \eqref{estmon}, we have
\begin{eqnarray*}
0 &\leq & \int_{\R^n}  \left( A\left( \nabla u(x)\right) \nabla \phi (x)\right) \cdot \nabla\phi (x)
          + \left( B\left( \nabla v(x)\right) \nabla \psi (x)\right) \cdot \nabla\psi (x) \, dx \\
        &&\qquad  +   \int_{\mathcal{D}_{uv}} F_{11}(u,v) \phi^2 (x)   +  F_{22}(u,v) \psi^2 (x)   \\
      &&\qquad \qquad    +  F_{12}(u,v) \left( \frac{v_n}{u_n} \phi^2 (x) + \frac{u_n}{v_n} \psi^2 (x)  \right)   \, dx \\
    &=&   \int_{\R^n}  \left( A\left( \nabla u(x)\right) \nabla \phi (x)\right) \cdot \nabla\phi (x)
          + \left( B\left( \nabla v(x)\right) \nabla \psi (x)\right) \cdot \nabla\psi (x) \, dx \\
        &&\qquad  +   \int_{\mathcal{D}_{uv}} F_{11}(u,v) \phi^2 (x)   +  F_{22}(u,v) \psi^2 (x)  \\
&&\qquad\qquad     -  F_{12}(u,v) \left( \frac{-v_n}{u_n} \phi^2 (x) + \frac{u_n}{-v_n} \psi^2 (x)  \right)   \, dx \\
   &\leq & \int_{\R^n}  \left( A\left( \nabla u(x)\right) \nabla \phi (x)\right) \cdot \nabla\phi (x)
          + \left( B\left( \nabla v(x)\right) \nabla \psi (x)\right) \cdot \nabla\psi (x) \, dx \\
        &&\qquad  +   \int_{\mathcal{D}_{uv}} F_{11}(u,v) \phi^2 (x)   +  F_{22}(u,v) \psi^2 (x)
          + 2 F_{12}(u,v) \phi (x)  \psi (x)    \, dx ,
\end{eqnarray*}
where we have used the monotonicity condition, the fact that $F_{12}(u,v)\geq 0$,
together with
$$   0 \leq \left( \sqrt{\frac{-v_n}{u_n}} \phi (x) + \sqrt{\frac{u_n}{-v_n}} \psi (x)\right)^2 =
    \frac{-v_n}{u_n} \phi^2 (x) + 2 \phi (x) \psi (x) + \frac{u_n}{-v_n} \psi^2 (x) .  $$
This concludes the proof.
\end{proof}

In the subsequents Theorem \ref{T2} and Corollay \ref{cor2}, we prove that a formula obtained in \cite{SZ1, SZ2}
and its extension obtained in \cite{FSV} hold also for a system of the form \eqref{system}.
These formulas relate the stability of the system with the principal curvatures of the corresponding level sets
and with the tangential gradient of the solution.

 \begin{theorem}\label{T2}
Let $\Omega\subseteq\R^n$ be open (not necessarily bounded).
Let~$(u, v)$ be a stable weak solution of~\eqref{system}, with
$u\in C^1(\Omega)\cap C^2 (\Omega \cap \left\lbrace \nabla u \neq 0\right\rbrace)$,
$v\in C^1 \left(\Omega\right)\cap C^2 (\Omega \cap \left\lbrace \nabla v \neq 0\right\rbrace)$,
and $\nabla u, \nabla v \in W^{1,2}_{loc}\left(\Omega\right)$.
Suppose that either (A2) holds or that $\left\lbrace \nabla u = 0 \right\rbrace =\varnothing$,
and that either (B2) holds or that $\left\lbrace \nabla v = 0 \right\rbrace =\varnothing$.

For any $x\in\Omega$ let $L_{u,x}$ and $L_{v,x}$ denote the level set of $u$ and $v$ respectively at $x$,
according to \eqref{levelset}.

Let also $\lambda_1 (|\xi |), \lambda_2 (|\xi |), \gamma_1 (|\xi |), \gamma_2 (|\xi |)$
be as in \eqref{lambda} and \eqref{gamma}.

Then,
\begin{equation}\begin{split}\nonumber
 & \int_{\Omega\cap\left\lbrace \nabla u \neq 0\right\rbrace}  \big[ \lambda_1 \left( |\nabla u(x)|\right)  \big| \nabla_{L_{u,x}} |\nabla u| (x)\big|^2 \\
&\qquad\qquad     + \lambda_2 \left( |\nabla u(x)|\right) |\nabla u(x)|^2 \sum_{l=1}^{n-1} k_{l,u}^2 \big]  \varphi^2(x) \, dx \\
  &\qquad  +      \int_{\Omega\cap\left\lbrace \nabla v \neq 0\right\rbrace}  \big[ \gamma_1 \left( |\nabla v(x)|\right)  \big| \nabla_{L_{v,x}} |\nabla v| (x)\big|^2 \\
 &\qquad\qquad + \gamma_2 \left( |\nabla v(x)|\right) |\nabla v(x)|^2 \sum_{l=1}^{n-1} k_{l,v}^2 \big]  \varphi^2(x) \, dx \\
 &    \leq   \int_{\Omega} |\nabla u(x)|^2 \left( A\left( \nabla u(x)\right) \nabla\varphi (x) \right) \cdot \nabla\varphi (x)\, dx \\
   &\qquad  +  \int_{\Omega} |\nabla v(x)|^2 \left( B\left( \nabla v(x)\right) \nabla\varphi (x) \right) \cdot \nabla\varphi (x)  \, dx \\
    &\qquad + 2 \int_{\Omega} F_{12}(u,v) \left( |\nabla u(x)| \, |\nabla v(x)| - \nabla u (x) \cdot \nabla v(x) \right) \varphi^2 (x) \, dx, \end{split}
\end{equation}
for any locally Lipschitz function~$\varphi:\Omega\rightarrow\R$ whose support is compact and
contained in~$\Omega$.
\end{theorem}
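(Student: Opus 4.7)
The proof plan is to run an argument parallel to that of Theorem~\ref{T1}, but driven by the stability inequality~\eqref{stable1} rather than by the monotonicity-based bounds~\eqref{estmon}. Concretely, I would feed into~\eqref{stable1} the test pair $\phi:=|\nabla u|\,\varphi$, $\psi:=|\nabla v|\,\varphi$. Both belong to $W^{1,2}_0(B)$ for any ball $B\supset\mathrm{supp}\,\varphi$, because $\nabla u,\nabla v\in W^{1,2}_{loc}(\Omega)$ and $\varphi$ is locally Lipschitz with compact support, so Lemma~\ref{lemma4} makes them admissible. Expanding $\nabla(|\nabla u|\,\varphi)=\varphi\nabla|\nabla u|+|\nabla u|\nabla\varphi$ and using the symmetry of $A(\nabla u)$ together with the identity $2|\nabla u|\nabla|\nabla u|=\nabla|\nabla u|^2$ splits each quadratic form into a ``pure'' piece $\varphi^2 A(\nabla u)\nabla|\nabla u|\cdot\nabla|\nabla u|$, a cutoff piece $|\nabla u|^2 A(\nabla u)\nabla\varphi\cdot\nabla\varphi$, and a cross piece $\tfrac12 A(\nabla u)\nabla|\nabla u|^2\cdot\nabla\varphi^2$; the analogous decomposition holds for the $v$-contribution.

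The next step is to eliminate the $F_{11}$ and $F_{22}$ contributions and the cross pieces. For this I would test the linearized system~\eqref{system3} (admissible with $W^{1,2}_0$ test functions by Lemma~\ref{lemma2bis}) with $u_j\varphi^2$ and $v_j\varphi^2$, sum over $j=1,\dots,n$, and use $\sum_j u_j\nabla u_j=\tfrac12\nabla|\nabla u|^2$ to obtain
$$\sum_j\int \varphi^2 A(\nabla u)\nabla u_j\cdot\nabla u_j\,dx+\tfrac12\int A(\nabla u)\nabla|\nabla u|^2\cdot\nabla\varphi^2\,dx=-\int_{\mathcal{D}_{uv}}\bigl(F_{11}|\nabla u|^2+F_{12}\nabla u\cdot\nabla v\bigr)\varphi^2\,dx,$$
and the analogous identity with $B,v,F_{22}$. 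Adding these two identities and inserting them into the expanded stability inequality, the cross pieces cancel exactly and the $F_{11}$ and $F_{22}$ contributions cancel with their counterparts coming from~\eqref{stable1}; the only surviving $F$-term is $2F_{12}\bigl(|\nabla u||\nabla v|-\nabla u\cdot\nabla v\bigr)\varphi^2$. As in the proof of Theorem~\ref{T1}, the integration can be enlarged from $\mathcal{D}_{uv}$ to $\Omega$, since~\eqref{borel} and Stampacchia's theorem force $\nabla u=\nabla v=0$ a.e.\ on $\mathcal{N}_{uv}$.

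What remains on the right is the combination $\varphi^2\bigl[A(\nabla u)\nabla|\nabla u|\cdot\nabla|\nabla u|-\sum_j A(\nabla u)\nabla u_j\cdot\nabla u_j\bigr]$, together with its twin built from $B$ and $v$. By Lemma~\ref{lemma3}, on $\{\nabla u\neq 0\}$ this equals $-a'(|\nabla u|)|\nabla u|\,\big|\nabla_{L_{u,x}}|\nabla u|\big|^2-a(|\nabla u|)\bigl(\big|\nabla|\nabla u|\big|^2-\sum_j|\nabla u_j|^2\bigr)$, which via~\eqref{curv} and the notation~\eqref{lambda} collapses to $-\lambda_1(|\nabla u|)\big|\nabla_{L_{u,x}}|\nabla u|\big|^2-\lambda_2(|\nabla u|)|\nabla u|^2\sum_l k_{l,u}^2$. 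Moving these terms to the left-hand side delivers exactly the claimed inequality. The main obstacle is the possible degenerate set $\{\nabla u=0\}\cup\{\nabla v=0\}$, where $|\nabla u|$ need not be $C^2$ and $A(\nabla u)$ is not intrinsically defined; this is precisely why the tangential-gradient and curvature integrals on the LHS are restricted to $\{\nabla u\neq 0\}$ and $\{\nabla v\neq 0\}$. The Stampacchia-type identity $A(\nabla u)\nabla u_j=0$ a.e.\ on $\{\nabla u=0\}$ established inside Lemma~\ref{lemma2} (under (A2) or $\{\nabla u=0\}=\varnothing$, and symmetrically for $v$) guarantees that the degenerate set contributes nothing to any $A$- or $B$-integrand in the computation, so this restriction is free of charge.
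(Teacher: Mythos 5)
Your proposal is correct and follows essentially the same route as the paper's proof: test the stability inequality~\eqref{stable1} with $\phi=|\nabla u|\varphi$, $\psi=|\nabla v|\varphi$ (admissible by Lemma~\ref{lemma4}), test the linearized system~\eqref{system3} with $u_j\varphi^2$, $v_j\varphi^2$ and sum over $j$, combine to cancel the $F_{11}$, $F_{22}$ and cross terms, then apply Lemma~\ref{lemma3} together with~\eqref{lambda}, \eqref{gamma}, \eqref{curv}, using~\eqref{borel} and Stampacchia's theorem to pass from $\mathcal{D}_{uv}$ to $\Omega$ and to dismiss the degenerate set $\{\nabla u=0\}\cup\{\nabla v=0\}$. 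The only cosmetic difference is that you package the cross term as $\tfrac12 A(\nabla u)\nabla|\nabla u|^2\cdot\nabla\varphi^2$ where the paper writes $2\varphi|\nabla u|(A(\nabla u)\nabla\varphi)\cdot\nabla|\nabla u|$, but these are identical.
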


\begin{proof}
Since the maps $x\mapsto u_j (x)$ and $x\mapsto |\nabla u(x)|$ belong to $W^{1,2}_{loc}(\R^n)$,
by using Stampacchia's Theorem (see Theorem 6.19 in \cite{LL}) we have that
$$
  \nabla |\nabla u| = 0 \mathrm{\ almost\ everywhere\ on\ } \left\lbrace |\nabla u| =0\right\rbrace
$$
and, for any $j=1, \ldots , n$,
$$
  \nabla u_j =0 \mathrm{\ almost\ everywhere\ on\ } \left\lbrace |\nabla u| =0\right\rbrace \subseteq \left\lbrace u_j =0\right\rbrace .
$$

Now, we take~$\phi =u_j \varphi^2$ in the first equality in~\eqref{system3} and
we sum over~$j$ to obtain
\begin{equation}\begin{split}\label{st1}
& \sum_j \int_{\R^n} \left(A(\nabla u) \nabla u_j \right)\cdot \nabla (u_j \varphi^2 )\, dx \\
&\qquad + \int_{\mathcal{D}_{uv}} F_{11}(u,v) |\nabla u|^2 \varphi^2 + F_{12}(u,v)\nabla u\cdot\nabla v \, \varphi^2 \, dx =0.
\end{split}\end{equation}
Notice that~\eqref{borel} and Theorem 6.19 of \cite{LL} give that
$$
  \nabla u =0=\nabla v \mathrm{\ almost\ everywhere\ on\ } \mathcal{N}_{uv},
$$
and therefore
\begin{eqnarray*}
 && \int_{\mathcal{D}_{uv}} F_{11}(u,v) |\nabla u|^2 \varphi^2 + F_{12}(u,v)\nabla u\cdot\nabla v \, \varphi^2 \, dx  \\
&&\qquad =\int_{\R^n} F_{11}(u,v) |\nabla u|^2 \varphi^2 + F_{12}(u,v)\nabla u\cdot\nabla v \, \varphi^2 \, dx.
\end{eqnarray*}

Taking~$\psi =v_j \varphi^2$ in the second equality in~\eqref{system3} and summing over~$j$,
we obtain
\begin{equation}\begin{split}\label{st2}
 &\sum_j \int_{\R^n} \left(B(\nabla v) \nabla v_j \right)\cdot \nabla (v_j \varphi^2 )\, dx \\&\qquad +
\int_{\R^n} F_{12}(u,v)\nabla u\cdot\nabla v\, \varphi^2 + F_{22}(u,v) |\nabla v|^2 \varphi^2 \, dx =0.
\end{split}\end{equation}

Now, we exploit the stability condition~\eqref{stable1} with~$\phi=|\nabla u|\varphi$
and~$\psi=|\nabla v|\varphi$.
Note that this choice is possible, thanks to Lemma~\ref{lemma4}, and gives
\begin{eqnarray}\label{st3}
 0&\leq & \int_{\R^n} \left(A(\nabla u)\nabla(|\nabla u|\varphi)\right)\cdot\nabla(|\nabla u|\varphi)
+ \left(B(\nabla v)\nabla(|\nabla v|\varphi)\right)\cdot\nabla(|\nabla v|\varphi) \nonumber\\
&& + F_{11}(u,v)|\nabla u|^2 \varphi^2 + F_{22}(u,v) |\nabla v|^2 \varphi^2
+ 2 F_{12}(u,v) |\nabla u| \, |\nabla v| \varphi^2 \, dx \nonumber\\
&=& \int_{\R^n} |\nabla u|^2 \left(A(\nabla u)\nabla\varphi\right)\cdot\nabla\varphi +
|\nabla v|^2 \left(B(\nabla v)\nabla\varphi\right)\cdot\nabla\varphi \nonumber\\
&& + \varphi^2 \left(A(\nabla u)\nabla |\nabla u|\right)\cdot\nabla |\nabla u| +
\varphi^2 \left(B(\nabla v)\nabla |\nabla v|\right)\cdot\nabla |\nabla v| \nonumber\\
&& + 2\varphi |\nabla u| \left(A(\nabla u)\nabla\varphi\right)\cdot\nabla |\nabla u| +
 2\varphi |\nabla v| \left(B(\nabla v)\nabla\varphi\right)\cdot\nabla |\nabla v| \nonumber\\
&& + F_{11}(u,v) |\nabla u|^2 \varphi^2 + F_{22}(u,v) |\nabla v|^2 \varphi^2
+ 2F_{12}(u,v) |\nabla u|\, |\nabla v| \, \varphi^2 dx.
\end{eqnarray}
By using~\eqref{st1} and~\eqref{st2} in~\eqref{st3}, we get
\begin{eqnarray*}
 0&\leq & \int_{\R^n} |\nabla u|^2 \left(A(\nabla u)\nabla\varphi\right)\cdot\nabla\varphi +
|\nabla v|^2 \left(B(\nabla v)\nabla\varphi\right)\cdot\nabla\varphi \, dx\\
&&\qquad + \int_{\left\lbrace \nabla u\neq 0\right\rbrace }\varphi^2 \left(A(\nabla u)\nabla |\nabla u|\right)\cdot\nabla |\nabla u|\, dx \\ &&\qquad +
\int_{\left\lbrace \nabla v\neq 0\right\rbrace }\varphi^2 \left(B(\nabla v)\nabla |\nabla v|\right)\cdot\nabla |\nabla v|\, dx \\
&&\qquad + \int_{\left\lbrace \nabla u\neq 0\right\rbrace }2\varphi |\nabla u| \left(A(\nabla u)\nabla\varphi\right)\cdot\nabla |\nabla u| - \sum_j \left(A(\nabla u) \nabla u_j \right)\cdot \nabla (u_j \varphi^2 )\, dx \\
&&\qquad + \int_{\left\lbrace \nabla v\neq 0\right\rbrace } 2\varphi |\nabla v| \left(B(\nabla v)\nabla\varphi\right)\cdot\nabla |\nabla v| - \sum_j \left(B(\nabla v) \nabla v_j \right)\cdot \nabla (v_j \varphi^2 )\, dx
\\
&&\qquad + \int_{\R^n} 2F_{12}(u,v) \left(|\nabla u|\, |\nabla v|-\nabla u\cdot\nabla v\right) \, \varphi^2 \, dx \\
 &=& \int_{\R^n} |\nabla u|^2 \left(A(\nabla u)\nabla\varphi\right)\cdot\nabla\varphi +
|\nabla v|^2 \left(B(\nabla v)\nabla\varphi\right)\cdot\nabla\varphi \, dx\\
&&\qquad + \int_{\left\lbrace \nabla u\neq 0\right\rbrace }\varphi^2 \left[\left(A(\nabla u)\nabla |\nabla u|\right)\cdot\nabla |\nabla u| - \sum_j \left(A(\nabla u) \nabla u_j \right)\cdot \nabla u_j \right] \, dx \\
&&\qquad + \int_{\left\lbrace \nabla v\neq 0\right\rbrace }\varphi^2 \left[\left(B(\nabla v)\nabla |\nabla v|\right)\cdot\nabla |\nabla v| -\sum_j \left(B(\nabla v) \nabla v_j \right)\cdot \nabla v_j \right] \, dx \\
&&\qquad + \int_{\R^n} 2F_{12}(u,v) \left(|\nabla u|\, |\nabla v|-\nabla u\cdot\nabla v\right) \, \varphi^2 \, dx.
\end{eqnarray*}
Now, the use of Lemma \ref{lemma3} implies
\begin{eqnarray*}
 0&\leq&  \int_{\R^n} |\nabla u|^2 \left(A(\nabla u)\nabla\varphi\right)\cdot\nabla\varphi +
|\nabla v|^2 \left(B(\nabla v)\nabla\varphi\right)\cdot\nabla\varphi \, dx \\
&&\qquad + \int_{\left\lbrace \nabla u\neq 0\right\rbrace} \varphi^2 \big[ a(|\nabla u|) \left( |\nabla|\nabla u||^2 - \sum_j |\nabla u_j|^2 \right) \\ &&\qquad \qquad - a'(|\nabla u|) |\nabla u|\, |\nabla_{L_{u,x}}|\nabla u||^2 \big] \, dx \\
\end{eqnarray*}
\begin{eqnarray*}
&&\qquad + \int_{\left\lbrace \nabla v\neq 0\right\rbrace} \varphi^2 \big[ b(|\nabla v|) \left( |\nabla|\nabla v||^2 - \sum_j |\nabla v_j|^2 \right) \\ &&\qquad\qquad - b'(|\nabla v|) |\nabla v|\, |\nabla_{L_{v,x}}|\nabla v||^2 \big] \, dx \\
&&\qquad +  \int_{\R^n} 2F_{12}(u,v) \left(|\nabla u|\, |\nabla v|-\nabla u\cdot\nabla v\right) \, \varphi^2 \, dx.
\end{eqnarray*}
That is, using~\eqref{lambda} and~\eqref{gamma}
\begin{eqnarray*}
&& \int_{\R^n} |\nabla u|^2 \left(A(\nabla u)\nabla\varphi\right)\cdot\nabla\varphi +
|\nabla v|^2 \left(B(\nabla v)\nabla\varphi\right)\cdot\nabla\varphi \, dx \\
&&\qquad + \int_{\R^n} 2F_{12}(u,v) \left(|\nabla u|\, |\nabla v|-\nabla u\cdot\nabla v\right) \, \varphi^2 \, dx \\
&\geq& \int_{\left\lbrace \nabla u\neq 0\right\rbrace} \varphi^2 \big[ \lambda_1 (|\nabla u|) |\nabla_{L_{u,x}}|\nabla u||^2 \\&&\qquad \qquad + \lambda_2 (|\nabla u|) \left( \sum_j |\nabla u_j|^2 - |\nabla_{L_{u,x}}|\nabla u||^2 - |\nabla|\nabla u||^2 \right)\big] \, dx \\
&& + \int_{\left\lbrace \nabla v\neq 0\right\rbrace} \varphi^2 \big[ \gamma_1 (|\nabla v|) |\nabla_{L_{v,x}}|\nabla v||^2 \\&&\qquad \qquad + \gamma_2 (|\nabla v|) \left( \sum_j |\nabla v_j|^2 - |\nabla_{L_{v,x}}|\nabla v||^2 - |\nabla|\nabla v||^2 \right)\big] \, dx.
\end{eqnarray*}
This and~\eqref{curv} imply the desired result.
\end{proof}

An immediate consequence of Theorem~\ref{T2} is the following:
\begin{corollary}\label{cor2}
Let $\Omega\subseteq\R^n$ be open (not necessarily bounded).
Let~$(u, v)$ be a stable weak solution of~\eqref{system}, with
$u\in C^1(\Omega)\cap C^2 (\Omega \cap \left\lbrace \nabla u \neq 0\right\rbrace)$,
$v\in C^1 \left(\Omega\right)\cap C^2 (\Omega \cap \left\lbrace \nabla v \neq 0\right\rbrace)$,
and $\nabla u, \nabla v \in W^{1,2}_{loc}\left(\Omega\right)$.
Suppose that either (A2) holds or that $\left\lbrace \nabla u = 0 \right\rbrace =\varnothing$,
and that either (B2) holds or that $\left\lbrace \nabla v = 0 \right\rbrace =\varnothing$.
Moreover, assume that $F_{12}(u,v)\leq 0$.

For any $x\in\Omega$ let $L_{u,x}$ and $L_{v,x}$ denote the level set of $u$ and $v$ respectively at $x$,
according to \eqref{levelset}.

Let also $\lambda_1 (|\xi |), \lambda_2 (|\xi |), \gamma_1 (|\xi |), \gamma_2 (|\xi |)$
be as in \eqref{lambda} and \eqref{gamma}.

Then,
%\begin{equation}\begin{split}\nonumber
% & \int_{\Omega\cap\left\lbrace \nabla u \neq 0\right\rbrace}  \big[ \lambda_1 \left( |\nabla u(x)|\right)  \big| \nabla_{L_{u,x}} |\nabla u| (x)\big|^2 \\
%    &\qquad + \lambda_2 \left( |\nabla u(x)|\right) |\nabla u(x)|^2 \sum_{l=1}^{n-1} k_{l,u}^2 \big]  \varphi^2(x) \, dx \\
%   & +      \int_{\Omega\cap\left\lbrace \nabla v \neq 0\right\rbrace}  \big[ \gamma_1 \left( |\nabla v(x)|\right)  \big| \nabla_{L_{v,x}} |\nabla v| (x)\big|^2 \\
%    &\qquad + \gamma_2 \left( |\nabla v(x)|\right) |\nabla v(x)|^2 \sum_{l=1}^{n-1} k_{l,v}^2 \big]  \varphi^2(x) \, dx \\
%     &\leq  \int_{\Omega} |\nabla u(x)|^2 \left( A\left( \nabla u(x)\right) \nabla\varphi (x) \right) \cdot \nabla\varphi (x) \, dx \\
%     &\qquad +  \int_{\Omega} |\nabla v(x)|^2 \left( B\left( \nabla v(x)\right) \nabla\varphi (x) \right) \cdot \nabla\varphi (x)\, dx,
%\end{split}\end{equation}

\begin{align*}
 & \int_{\Omega\cap\left\lbrace \nabla u \neq 0\right\rbrace}  \big[ \lambda_1 \left( |\nabla u(x)|\right)  \big| \nabla_{L_{u,x}} |\nabla u| (x)\big|^2 \\
    &\qquad + \lambda_2 \left( |\nabla u(x)|\right) |\nabla u(x)|^2 \sum_{l=1}^{n-1} k_{l,u}^2 \big]  \varphi^2(x) \, dx \\
   & +      \int_{\Omega\cap\left\lbrace \nabla v \neq 0\right\rbrace}  \big[ \gamma_1 \left( |\nabla v(x)|\right)  \big| \nabla_{L_{v,x}} |\nabla v| (x)\big|^2 \\
    &\qquad + \gamma_2 \left( |\nabla v(x)|\right) |\nabla v(x)|^2 \sum_{l=1}^{n-1} k_{l,v}^2 \big]  \varphi^2(x) \, dx \\
    \end{align*}
    \begin{align*}
     &\leq  \int_{\Omega} |\nabla u(x)|^2 \left( A\left( \nabla u(x)\right) \nabla\varphi (x) \right) \cdot \nabla\varphi (x) \, dx \\
     &\qquad +  \int_{\Omega} |\nabla v(x)|^2 \left( B\left( \nabla v(x)\right) \nabla\varphi (x) \right) \cdot \nabla\varphi (x)\, dx,
\end{align*}
for any locally Lipschitz function~$\varphi :\Omega\rightarrow\R$ whose support is compact
and contained in~$\Omega$.
\end{corollary}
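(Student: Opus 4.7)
The plan is to deduce Corollary \ref{cor2} as an immediate consequence of Theorem \ref{T2}, exploiting the sign hypothesis $F_{12}(u,v)\le 0$ together with the Cauchy--Schwarz inequality applied pointwise to $\nabla u$ and $\nabla v$.

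First, I would observe that the hypotheses of Corollary \ref{cor2} are exactly those of Theorem \ref{T2}: $(u,v)$ is a stable weak solution with the regularity $u\in C^1(\Omega)\cap C^2(\Omega\cap\{\nabla u\neq 0\})$, $v\in C^1(\Omega)\cap C^2(\Omega\cap\{\nabla v\neq 0\})$, and $\nabla u,\nabla v\in W^{1,2}_{loc}(\Omega)$, and either (A2) holds or $\{\nabla u=0\}=\varnothing$ (and analogously for $b,v$). Thus, for any locally Lipschitz $\varphi:\Omega\to\R$ with compact support in $\Omega$, the inequality of Theorem \ref{T2} applies, giving
\begin{equation*}\begin{split}
& \int_{\Omega\cap\{\nabla u\neq 0\}} \bigl[\lambda_1(|\nabla u|)\,\bigl|\nabla_{L_{u,x}}|\nabla u|\bigr|^2+\lambda_2(|\nabla u|)\,|\nabla u|^2\sum_{l=1}^{n-1}k_{l,u}^2\bigr]\varphi^2\,dx\\
&\ +\int_{\Omega\cap\{\nabla v\neq 0\}}\bigl[\gamma_1(|\nabla v|)\,\bigl|\nabla_{L_{v,x}}|\nabla v|\bigr|^2+\gamma_2(|\nabla v|)\,|\nabla v|^2\sum_{l=1}^{n-1}k_{l,v}^2\bigr]\varphi^2\,dx\\
&\le \int_\Omega |\nabla u|^2\,(A(\nabla u)\nabla\varphi)\cdot\nabla\varphi\,dx+\int_\Omega |\nabla v|^2\,(B(\nabla v)\nabla\varphi)\cdot\nabla\varphi\,dx\\
&\ +2\int_\Omega F_{12}(u,v)\,\bigl(|\nabla u|\,|\nabla v|-\nabla u\cdot\nabla v\bigr)\varphi^2\,dx.
\end{split}\end{equation*}

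Next, I would handle the last integral. By the Cauchy--Schwarz inequality, at every point of $\Omega$,
$$\nabla u(x)\cdot\nabla v(x)\le |\nabla u(x)|\,|\nabla v(x)|,$$
so the quantity $|\nabla u|\,|\nabla v|-\nabla u\cdot\nabla v$ is pointwise nonnegative. Since by assumption $F_{12}(u,v)\le 0$ on $\Im(u,v)$, the integrand $F_{12}(u,v)\bigl(|\nabla u|\,|\nabla v|-\nabla u\cdot\nabla v\bigr)\varphi^2$ is nonpositive everywhere, and therefore
$$2\int_\Omega F_{12}(u,v)\bigl(|\nabla u|\,|\nabla v|-\nabla u\cdot\nabla v\bigr)\varphi^2\,dx\le 0.$$

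Dropping this nonpositive term from the right-hand side of the displayed inequality yields the estimate claimed by Corollary \ref{cor2}. There is no real obstacle in this proof beyond recognizing that the extra curvature/gradient term introduced in Theorem \ref{T2} to accommodate arbitrary sign of $F_{12}$ collapses under the combined hypotheses of Corollary \ref{cor2}; the main work has already been done in establishing Theorem \ref{T2}.
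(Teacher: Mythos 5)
Your argument is exactly the one the paper intends: the paper states Corollary~\ref{cor2} as ``an immediate consequence of Theorem~\ref{T2},'' and the implicit step is precisely that $|\nabla u|\,|\nabla v|-\nabla u\cdot\nabla v\ge 0$ pointwise by Cauchy--Schwarz, so under $F_{12}(u,v)\le 0$ the extra term on the right-hand side of Theorem~\ref{T2} is nonpositive and may be dropped. Your proof is correct and matches the paper's approach.
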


\section{Level set analysis}\label{sec:levset}
We recall here the geometric analysis performed in Subsection 2.4 in \cite{FSV}.
In order to make this paper self-contained,
we include the proofs in full detail.

We consider connected components of the level sets
(in the inherited topology).

\begin{lemma}\label{lemma5}
Let~$w\in C^1(\R^n)\cap C^2(\left\lbrace \nabla w\neq 0\right\rbrace)$.
Fix~$\overline{x}\in\R^n$, and suppose that
for any~$x\in L_{w,\overline{x}}\cap\left\lbrace \nabla w\neq 0\right\rbrace$,
we have that~$\nabla_{L_{w,x}}|\nabla w(x)|=0$.

Then,~$|\nabla w|$ is constant on every connected component
of~$L_{w,\overline{x}}\cap\left\lbrace \nabla w\neq 0\right\rbrace$.
\end{lemma}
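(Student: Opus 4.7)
The plan is to show that $|\nabla w|$ has zero derivative along any $C^1$ curve contained in a connected component, and hence is constant there.

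First, since $w\in C^2(\{\nabla w\neq 0\})$ and $\nabla w(x)\neq 0$ for every $x\in L_{w,\overline{x}}\cap\{\nabla w\neq 0\}$, the implicit function theorem makes $L_{w,\overline{x}}\cap\{\nabla w\neq 0\}$ a $C^2$ embedded hypersurface of $\R^n$. In particular, in the topology inherited from $\R^n$ it is locally path-connected, so its connected components coincide with its path-components and are relatively open.

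Next, fix such a connected component $C$ and take arbitrary $p,q\in C$. Choose a piecewise $C^1$ curve $\gamma:[0,1]\to C$ with $\gamma(0)=p$ and $\gamma(1)=q$. Since $w\circ\gamma\equiv w(\overline{x})$ is constant, differentiating gives
\begin{equation*}
\gamma'(t)\cdot \nabla w(\gamma(t))=0,
\end{equation*}
so $\gamma'(t)$ is tangent to $L_{w,\gamma(t)}$ at $\gamma(t)$.

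Finally, set $f(t):=|\nabla w(\gamma(t))|$; this is $C^1$ because $w\in C^2$ and $\nabla w\neq 0$ along $\gamma$. Using the definition of tangential gradient in \eqref{lv} together with the orthogonality relation above, the component of $\nabla|\nabla w|(\gamma(t))$ along $\nabla w(\gamma(t))$ contributes nothing to $\gamma'(t)\cdot\nabla|\nabla w|(\gamma(t))$, so
\begin{equation*}
f'(t)=\nabla|\nabla w|(\gamma(t))\cdot\gamma'(t)=\nabla_{L_{w,\gamma(t)}}|\nabla w|(\gamma(t))\cdot\gamma'(t),
\end{equation*}
which vanishes by the standing hypothesis. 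Hence $f$ is constant on $[0,1]$ and $|\nabla w(p)|=|\nabla w(q)|$, proving the claim. The only point that calls for a moment of reflection, rather than a genuine obstacle, is the local path-connectedness of $L_{w,\overline{x}}\cap\{\nabla w\neq 0\}$ in the subspace topology; this is immediate from its structure as a regular $C^2$ hypersurface.
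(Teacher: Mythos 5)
Your proof is correct and follows essentially the same route as the paper: differentiate $|\nabla w|$ along a curve lying in the level set, observe that the curve's velocity is tangent so the derivative reduces to the tangential gradient dotted with the velocity, and conclude from the hypothesis that this vanishes. The only cosmetic differences are that you make the tangency step $\gamma'(t)\cdot\nabla w(\gamma(t))=0$ explicit (the paper leaves it implicit in citing \eqref{lv}) and you allow piecewise~$C^1$ paths.
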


\begin{proof}
 Since any connected components of~$L_{w,\overline{x}}\cap\left\lbrace \nabla w\neq 0\right\rbrace$ is a regular hypersurface,
any two points in it may be joined by a~$C^1$ path.

We notice that, if~$t_1 >t_0 \in\R$ and~$\sigma\in C^1([t_0,t_1], L_{w,\overline{x}}\cap\left\lbrace \nabla w\neq 0\right\rbrace )$, then
$$
  \frac{d}{dt}|\nabla w(\sigma(t))|=\nabla|\nabla w(\sigma(t))|\cdot\dot{\sigma}(t)=
  \nabla_{L_{w,\overline{x}}}|\nabla w(\sigma(t))|\cdot\dot{\sigma(t)},
$$
thanks to~\eqref{lv}.
As a consequence, if $\sigma\in C^1([t_0,t_1], L_{w,\overline{x}}\cap\left\lbrace \nabla w\neq 0\right\rbrace )$,
then $|\nabla w(\sigma(t))|$ is constant for $t\in[t_0,t_1]$.

Now, we take~$a$ and~$b$ in~$L_{w,\overline{x}}\cap\left\lbrace \nabla w\neq 0\right\rbrace$
and~$\sigma\in C^1([0,1], L_{w,\overline{x}})$ such that~$\sigma(0)=a$
and~$\sigma(1)=b$. Then~$|\nabla w(a)|=|\nabla w(b)|$.
\end{proof}

\begin{corollary}\label{cor3}
Under the assumptions of Lemma~\ref{lemma5},
every connected component of~$L_{w,x}\cap\left\lbrace \nabla w\neq 0\right\rbrace$ is closed in~$\R^n$.
\end{corollary}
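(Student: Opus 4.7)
The plan is to let $C$ be a non-empty connected component of $L_{w,\overline{x}}\cap\{\nabla w\neq 0\}$ and show that $C$ is sequentially closed in $\R^n$. Take a sequence $x_k\in C$ with $x_k\to x_\infty$ in $\R^n$. I want to conclude $x_\infty\in C$, which will force closedness.

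First I would use the two continuities available. Since $w\in C^1(\R^n)$, continuity of $w$ gives $w(x_\infty)=\lim_k w(x_k)=w(\overline{x})$, so $x_\infty\in L_{w,\overline{x}}$. Next, by Lemma~\ref{lemma5}, the function $|\nabla w|$ is constant on $C$, equal to some value $c\geq 0$; because $C\subseteq\{\nabla w\neq 0\}$, we actually have $c>0$. Then continuity of $\nabla w$ yields $|\nabla w(x_\infty)|=\lim_k|\nabla w(x_k)|=c>0$, so $x_\infty\in L_{w,\overline{x}}\cap\{\nabla w\neq 0\}$. Thus $x_\infty$ is at least in the right set; what remains is to prove it lies in the \emph{same} connected component as the $x_k$.

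For this I would use the implicit function theorem at $x_\infty$: since $\nabla w(x_\infty)\neq 0$ and $w\in C^2$ near points where $\nabla w\neq 0$, there is an open ball $U\subseteq\R^n$ centered at $x_\infty$ such that $U\subseteq\{\nabla w\neq 0\}$ and $L_{w,\overline{x}}\cap U$ is a connected $C^1$ hypersurface. For $k$ sufficiently large, $x_k\in U$, and $x_k$ and $x_\infty$ both belong to $L_{w,\overline{x}}\cap U$, a connected subset of $L_{w,\overline{x}}\cap\{\nabla w\neq 0\}$. Since connected components are maximal connected subsets, $L_{w,\overline{x}}\cap U$ is entirely contained in the unique connected component through $x_k$, which is $C$. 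Therefore $x_\infty\in C$.

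There is no genuinely hard step here: the statement is essentially a packaging of Lemma~\ref{lemma5} with the continuity of $\nabla w$ and the local structure of regular level sets. The only point that needs a slight bit of attention is ruling out $c=0$, which is immediate because $C$ is contained in $\{\nabla w\neq 0\}$ by definition; if one is cautious about the degenerate possibility that $C=\varnothing$, then the statement is vacuous and there is nothing to prove.
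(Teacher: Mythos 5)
Your proof is correct and follows essentially the same route as the paper's: take a sequence in the component converging to a point $x_\infty$ in $\R^n$, use continuity of $w$ to place $x_\infty$ on the level set, and invoke Lemma~\ref{lemma5} together with continuity of $\nabla w$ to see that $|\nabla w(x_\infty)|$ equals the common nonzero value on the component, so $x_\infty\in\{\nabla w\neq 0\}$. The one place where you are a bit more explicit than the paper is the last step: you appeal to the implicit function theorem to exhibit a connected neighborhood of $x_\infty$ inside $L_{w,\overline{x}}\cap\{\nabla w\neq 0\}$ and thereby conclude $x_\infty$ lies in the \emph{same} component, whereas the paper simply asserts $y\in M$ after establishing $y\in L_{w,x}\cap\{\nabla w\neq 0\}$ (implicitly using that a connected component of a manifold is closed in that manifold). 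Either way the argument is sound; your added detail makes the topological point transparent.
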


\begin{proof}
Let~$M$ be any connected component of~$L_{w,x}\cap\left\lbrace \nabla w\neq 0\right\rbrace$.
With no loss of generality, we suppose that~$M\neq\varnothing$ and take~$z\in M$.

Let~$y\in\partial M$. Then there is a sequence~$x_n \in M$ approaching~$y$, thus
\begin{equation}\label{M}
w(y)=\lim_{n\rightarrow +\infty} w(x_n) = w(z).
\end{equation}
Then, by Lemma~\ref{lemma5}, we have that~$|\nabla w(x_n)|=|\nabla w(z)|$.
So, since~$z\in M$,
\begin{equation}\label{M1}
|\nabla w(y)|=\lim_{n\rightarrow +\infty} |\nabla w(x_n)| = |\nabla w(z)|\neq 0.
\end{equation}
By~\eqref{M} and~\eqref{M1}, we have that~$y\in M$.
\end{proof}

\begin{corollary}\label{cor4}
Let the assumptions of Lemma~\ref{lemma5} hold.
Let~$M$ be a connected component of~$L_{w,x}\cap\left\lbrace \nabla w\neq 0\right\rbrace$.
Suppose that~$M\neq\varnothing$ and~$M$ is contained in a hyperplane~$\pi$.
Then,~$M=\pi$.
\end{corollary}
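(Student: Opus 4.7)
The plan is a topological argument: show that $M$ is both open and closed in $\pi$, and then conclude from the connectedness of $\pi$ and the hypothesis $M\neq\varnothing$ that $M=\pi$. Closedness is immediate: by Corollary~\ref{cor3}, $M$ is closed in $\R^n$, hence in $\pi$.

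The substantive step is openness. Fix $y\in M$. Since $\nabla w(y)\neq 0$ and $w\in C^2$ in a neighborhood of $y$, the implicit function theorem provides an open ball $U$ around $y$ on which $\nabla w\neq 0$ and such that $S:=L_{w,x}\cap U$ is a connected $C^1$ hypersurface, with tangent space $(\nabla w(y))^{\perp}$ at $y$. Being connected and contained in $L_{w,x}\cap\{\nabla w\neq 0\}$, $S$ lies in a single connected component of the latter; since $y\in S\cap M$, we get $S\subseteq M\subseteq\pi$. Thus $S$ is an $(n-1)$-dimensional $C^1$ submanifold sitting inside the $(n-1)$-dimensional affine subspace $\pi$, which forces $T_y S$ to coincide with the direction of $\pi$, and hence $\nabla w(y)$ to be a nonzero normal to $\pi$. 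Choosing coordinates with one axis along $\nabla w(y)$ and the other $n-1$ axes parallel to $\pi$, the implicit function theorem presents $S$ locally as the graph of a $C^1$ function $h$ defined on a neighborhood $V$ of $y$ in $\pi$; but $S\subseteq\pi$ forces $h\equiv 0$ on $V$, so that $V\subseteq S\subseteq M$. Therefore $M$ contains an open neighborhood of $y$ in $\pi$, i.e., $M$ is open in $\pi$.

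The only delicate point is the rigorous identification ``two tangent $(n-1)$-submanifolds coincide locally when one is an affine hyperplane''; the cleanest way to handle it is via the graph representation from the implicit function theorem, as outlined above. Once this is in place, $M$ is a nonempty clopen subset of the connected set $\pi$, so $M=\pi$ by a one-line topological argument.
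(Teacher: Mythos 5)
Your proof is correct and follows the same overall strategy as the paper: show $M$ is both closed in $\pi$ (via Corollary~\ref{cor3}) and open in $\pi$, then invoke connectedness of $\pi$. The one place you go further is the openness step: the paper simply asserts that the local hypersurface piece $L_{w,x}\cap U_2\subseteq\pi$ is open in $\pi$, while you justify this by matching tangent spaces and using the graph representation from the implicit function theorem to exhibit an open neighborhood of $y$ in $\pi$ lying inside $M$ --- a welcome bit of added rigor, but not a different route.
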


\begin{proof}
We show that
\begin{equation}\label{M2}
 M \mathrm{\ is\ open\ in\ the\ topology\ of\ } \pi.
\end{equation}
For this, let~$z\in M$.
Then, there exists an open set~$U_1$ of~$\R^n$ such
that~$z\in U_1 \subset\left\lbrace \nabla w\neq 0\right\rbrace$.
Also, by the Implicit Function Theorem, there exists an open set~$U_2$ in~$\R^n$
for which~$z\in U_2$ and~$L_{w,x}\cap U_2$
is a hypersurface.
Since~$M\subseteq\pi$, we have that~$L_{w,x}\cap U_2 \subseteq\pi$,
hence~$L_{w,x}\cap U_2$ is open in the topology of~$\pi$.

Then,~$z\in L_{w,x}\cap U_1 \cap U_2$, which is an open set in~$\pi$.

This proves~\eqref{M2}.

Also,~$M$ is closed in~$\R^n$ and so~$M=M\cap\pi$ is closed in~$\pi$.

Hence,~$M$ is open and closed in~$\pi$.
\end{proof}

\begin{lemma}\label{lemma6}
Let~$w\in C^1(\R^n)\cap C^2(\left\lbrace \nabla w\neq 0\right\rbrace )$
be such that~$\nabla_{L_{w,x}}|\nabla w(x)|=0$ for every~$x\in\left\lbrace \nabla w\neq 0\right\rbrace$, and let~$\overline{x}\in\R^n$.

Suppose that a non-empty connected component~$\overline{L}$
of~$L_{w,\overline{x}}\cap\left\lbrace \nabla w\neq 0\right\rbrace$
has zero principal curvatures at all points.

Then,~$\overline{L}$ is a flat hyperplane.
\end{lemma}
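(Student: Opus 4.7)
The plan is to show that the hypothesis of vanishing principal curvatures forces $\overline L$ to lie inside a single hyperplane $\pi$, and then invoke Corollary \ref{cor4} to upgrade this inclusion to $\overline L=\pi$.

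First I would work locally. Fix $x_0\in\overline L$. Since $\nabla w(x_0)\neq 0$, the Implicit Function Theorem gives a relatively open neighborhood $V_{x_0}\subset\overline L$ which is a $C^2$ hypersurface, and we can parametrize it as the graph of a $C^2$ function over the tangent hyperplane $\pi_{x_0}$ at $x_0$. The second fundamental form of $V_{x_0}$ coincides (up to sign) with the Hessian of this graph function, and the assumption that all principal curvatures vanish at every point of $V_{x_0}$ means that this Hessian vanishes identically. Hence the graph function is affine, and in fact (being zero with zero gradient at $x_0$) identically zero, so $V_{x_0}\subset\pi_{x_0}$.

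Next I would upgrade this local inclusion to a global one by a standard connectedness argument. For every $y\in V_{x_0}$ we have $T_y\overline L\subseteq T_y\pi_{x_0}=\pi_{x_0}-y$, so the tangent hyperplane of $\overline L$ at $y$ agrees with $\pi_{x_0}$ as an affine hyperplane of $\R^n$. Consequently the map $x\mapsto \pi_x$, sending a point of $\overline L$ to its affine tangent hyperplane in $\R^n$, is locally constant. Since $\overline L$ is connected, this map is constant: there is a single hyperplane $\pi$ such that $\overline L\subseteq \pi$.

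Finally, with $\overline L$ non-empty, connected component of $L_{w,\overline x}\cap\{\nabla w\neq 0\}$ and $\overline L\subseteq\pi$, the hypotheses of Corollary \ref{cor4} are satisfied (the assumption $\nabla_{L_{w,x}}|\nabla w|=0$ on $\{\nabla w\neq 0\}$ is exactly what was used there). Corollary \ref{cor4} then yields $\overline L=\pi$, proving that $\overline L$ is a flat hyperplane. The only delicate point is to be careful that the local parametrization is indeed $C^2$ and that the vanishing of all $k_{l,w}$ really amounts to the vanishing of the full Hessian of the graphing function; once this is in place, the rest is just a connectedness argument followed by the application of Corollary \ref{cor4}.
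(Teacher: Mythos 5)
Your proof is correct and follows the same route as the paper: deduce from vanishing principal curvatures that the (affine) tangent hyperplane is locally constant, hence constant by connectedness, so $\overline L$ lies in a single hyperplane, and then invoke Corollary~\ref{cor4}. The paper simply compresses the first part into a citation of a standard differential-geometry fact (constant normal), whereas you spell out the local graph/Hessian argument; the structure is identical.
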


\begin{proof}
We use a standard  differential geometry argument (see, for instance, page 311 in \cite{Se}). Since the principal curvatures vanish identically, the normal of~$\overline{L}$ is constant,
thence~$\overline{L}$ is contained in a hyperplane.

Then, the claim follows from Corollary~\ref{cor4}.
\end{proof}

\begin{lemma}\label{lemma7}
Let~$w\in C^1(\R^n)\cap C^2(\left\lbrace \nabla w\neq 0\right\rbrace )$.
Suppose that
\begin{equation}\begin{split}\label{M3}
\mathrm{any\ connected\ component\ of\ } L_{w,x}\cap\left\lbrace \nabla w\neq 0\right\rbrace \\ \mathrm{\ has\ zero\ principal\ curvatures\ at\ all\ points}
                \end{split}
\end{equation}
and that, for any~$x\in \left\lbrace \nabla w\neq 0\right\rbrace$,
\begin{equation}\label{M4}
 \nabla_{L_{w,x}}|\nabla w(x)|=0.
\end{equation}

Then,~$w$ possesses one-dimensional symmetry, in the sense that there
exists~$\overline{w}:\R\rightarrow\R$ and~$\omega\in S^{n-1}$ in such a way
that~$w(x)=\overline{w}(\omega\cdot x)$, for any~$x\in\R^n$.
\end{lemma}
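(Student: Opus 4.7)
The plan is to show that, except in the trivial case $\nabla w\equiv 0$, the level sets of $w$ foliate $\R^n$ by parallel affine hyperplanes, all orthogonal to a single direction $\omega$, which immediately forces $w(x)=\overline w(\omega\cdot x)$. If $\nabla w\equiv 0$ then $w$ is constant and the conclusion holds with any $\omega\in S^{n-1}$. Otherwise I pick $x_0\in\{\nabla w\neq 0\}$ and invoke Lemma~\ref{lemma6}: the connected component $M_0$ of $L_{w,x_0}\cap\{\nabla w\neq 0\}$ through $x_0$ is a flat hyperplane, which Corollary~\ref{cor4} upgrades to the \emph{entire} affine hyperplane $\pi_0$. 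Setting $\omega:=\nabla w(x_0)/|\nabla w(x_0)|\in S^{n-1}$, Lemma~\ref{lemma5} gives that $|\nabla w|$ is constant on $\pi_0$; since $\nabla w$ is normal to $\pi_0$ at each of its points, connectedness of $\pi_0$ and continuity of $\nabla w$ yield $\nabla w\equiv |\nabla w(x_0)|\,\omega$ on the whole of $\pi_0$, so in particular $\nabla w$ never vanishes on $\pi_0$.

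Next I show that for every $y\in\{\nabla w\neq 0\}$, the connected component $\pi_y$ through $y$ is a full affine hyperplane parallel to $\pi_0$. Lemma~\ref{lemma6} and Corollary~\ref{cor4} give that $\pi_y$ is a full affine hyperplane with unit normal $\omega_y:=\nabla w(y)/|\nabla w(y)|$, entirely contained in $\{\nabla w\neq 0\}$. Suppose toward a contradiction that $\omega_y\neq\pm\omega$; then $\pi_y$ and $\pi_0$ are non-parallel hyperplanes in $\R^n$ and must meet at some point $z$. Since $\pi_y\subset L_{w,y}$ and $\pi_0\subset L_{w,x_0}$, this forces $w(y)=w(z)=w(x_0)$, and hence $\pi_y$ and $\pi_0$ are two connected components of the single set $L_{w,x_0}\cap\{\nabla w\neq 0\}$ sharing the point $z$. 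They must therefore coincide, contradicting $\omega_y\neq\pm\omega$. Thus $\omega_y=\pm\omega$ for every $y\in\{\nabla w\neq 0\}$.

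Finally I conclude that $w$ is constant on each affine hyperplane $\pi_t:=\{x\in\R^n:\omega\cdot x=t\}$, which lets me define $\overline w(t):=w(x)$ for any $x\in\pi_t$ and obtain $w(x)=\overline w(\omega\cdot x)$. Indeed, fix $t\in\R$: if some $x\in\pi_t$ satisfies $\nabla w(x)\neq 0$, the previous paragraph identifies the component through $x$ as a full hyperplane with normal $\pm\omega$ passing through $x$, which is precisely $\pi_t$, so $w\equiv w(x)$ on $\pi_t$; otherwise $\nabla w\equiv 0$ on the connected set $\pi_t$, so $w|_{\pi_t}$ has vanishing intrinsic gradient and is constant. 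The main technical subtlety, which I expect to be the only delicate point, is bookkeeping around the possibly complicated critical set $\{\nabla w=0\}$; what makes the argument work cleanly is that Corollaries~\ref{cor3} and~\ref{cor4} promote each non-empty component of $L_{w,x}\cap\{\nabla w\neq 0\}$ to a \emph{full} affine hyperplane, which is exactly what the non-parallel intersection argument needs to reach its contradiction.
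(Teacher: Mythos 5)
Your proof is correct and follows essentially the same route as the paper's: use Lemma~\ref{lemma6} and Corollary~\ref{cor4} to promote each component of $L_{w,x}\cap\{\nabla w\neq 0\}$ to a full affine hyperplane, argue by intersection/disjointness of connected components that all such hyperplanes are parallel, and handle the remaining parallel slices where $\nabla w\equiv 0$ by noting the tangential gradient vanishes. You simply spell out in more detail the intersection argument that the paper compresses into ``all these hyperplanes are parallel, since connected components cannot intersect.''
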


\begin{proof}
If~$\nabla w(x)=0$ for any~$x\in\R^n$, the one-dimensional symmetry is trivial.

If~$\nabla w(x)\neq 0$, then the connected component
of~$L_{w,\overline{x}}\cap\left\lbrace \nabla w\neq 0\right\rbrace$
passing through~$\overline{x}$ is a hyperplane, thanks to Lemma~\ref{lemma6}.

We notice that all these hyperplanes are parallel,
since connected components cannot intersect.
Moreover,~$w$ is constant on these hyperplanes,
because each of them lies on a level set.

On the other hand,~$w$ is also constant on any other possible hyperplane parallel to
the ones of the above family, since the gradient vanishes identically there.

From this, the one-dimensional symmetry of~$w$ follows by noticing that~$w$
only depends on the orthogonal direction with respect to the above family of
hyperplanes.
\end{proof}

\section{Proof of Theorem \ref{T1D}}\label{sec:proof}
Since either~\eqref{mon} or~\eqref{stab} holds, from Corollaries~\ref{cor1} and~\ref{cor2}, we have
\begin{eqnarray}\label{70}
&& \int_{\left\lbrace \nabla u\neq 0\right\rbrace} \big[\lambda_1(|\nabla u(x)|) |\nabla_{L_{u,x}}|\nabla u|(x)|^2 \nonumber\\
&& \qquad\qquad +\lambda_2(|\nabla u(x)|)|\nabla u(x)|^2 \sum_{l=1}^{n-1} k_{l,u}^2\big]\varphi^2(x) \, dx \nonumber\\
   &&\qquad  +      \int_{\left\lbrace \nabla v \neq 0\right\rbrace}  \big[ \gamma_1 \left( |\nabla v(x)|\right)  \big| \nabla_{L_{v,x}} |\nabla v| (x)\big|^2 \nonumber\\
 &&\qquad\qquad    + \gamma_2 \left( |\nabla v(x)|\right) |\nabla v(x)|^2 \sum_{l=1}^{n-1} k_{l,v}^2 \big]  \varphi^2(x) \, dx \nonumber\\
     &\leq&  \int_{\R^n} |\nabla u(x)|^2 \left( A\left( \nabla u(x)\right) \nabla\varphi (x) \right) \cdot \nabla\varphi (x) \, dx \nonumber\\
    &&\qquad  +  \int_{\R^n} |\nabla v(x)|^2 \left( B\left( \nabla v(x)\right) \nabla\varphi (x) \right) \cdot \nabla\varphi (x)\, dx \nonumber\\
     &\leq& \int_{\R^n} \left( |A(\nabla u(x))|\, |\nabla u(x)|^2 + |B(\nabla v(x))|\, |\nabla v(x)|^2 \right) |\nabla\varphi(x)|^2 \, dx.
\end{eqnarray}

Now, we chose conveniently~$\varphi$ in~\eqref{70}.
For any~$R>1$, we define the function~$\varphi_R$ as
\begin{equation}\label{fiR}
\varphi_R(x):=\left\{
\begin{matrix}
1 & {\mbox{ if $x\in B_{\sqrt R}$,}}\\
2\, \frac{\log R-\log|x|}{\log R}
& {\mbox{ if $x\in B_R\setminus B_{\sqrt R}$,}}\\
0 & {\mbox{ if $x\in \R^n\setminus B_{R}$.}}
\end{matrix}
\right.
\end{equation}
We denote by
$$ \chi_R:=\chi_{B_R\setminus B_{\sqrt R}}. $$
Notice that
$$  |\nabla\varphi_R(x)|=\frac{\chi_R(x)}{2|x|\log R}. $$
Therefore, by using~$\varphi_R$ in~\eqref{70}, we have
\begin{eqnarray}\label{71}
&& \int_{B_{\sqrt R}\cap\left\lbrace \nabla u\neq 0\right\rbrace} \big[\lambda_1(|\nabla u(x)|) |\nabla_{L_{u,x}}|\nabla u|(x)|^2 \nonumber\\
&&\qquad\qquad +\lambda_2(|\nabla u(x)|)|\nabla u(x)|^2 \sum_{l=1}^{n-1} k_{l,u}^2\big] \, dx \nonumber\\
 &&\qquad    +      \int_{B_{\sqrt R}\cap\left\lbrace \nabla v \neq 0\right\rbrace}  \big[ \gamma_1 \left( |\nabla v(x)|\right)  \big| \nabla_{L_{v,x}} |\nabla v| (x)\big|^2 \nonumber\\
&&\qquad\qquad + \gamma_2 \left( |\nabla v(x)|\right) |\nabla v(x)|^2 \sum_{l=1}^{n-1} k_{l,v}^2 \big]   \, dx \nonumber\\
&\leq&  \frac{C}{\log^2 R}\int_{B_R \setminus B_{\sqrt{R}}}  \frac{|A(\nabla u(x))|\, |\nabla u(x) |^2 +|B(\nabla v(x))|\, |\nabla v(x) |^2}{|x|^2}\, dx.
\end{eqnarray}
Letting~$R\rightarrow +\infty$ in~\eqref{71}, by the hypothesis~\eqref{EE}, we obtain
\begin{eqnarray*}
&& \int_{\left\lbrace \nabla u\neq 0\right\rbrace} \big[\lambda_1(|\nabla u(x)|) |\nabla_{L_{u,x}}|\nabla u|(x)|^2 \\ &&\qquad\qquad +\lambda_2(|\nabla u(x)|)|\nabla u(x)|^2 \sum_{l=1}^{n-1} k_{l,u}^2\big] \, dx \\
  &&\qquad  +      \int_{\left\lbrace \nabla v \neq 0\right\rbrace}  \big[ \gamma_1 \left( |\nabla v(x)|\right)  \big| \nabla_{L_{v,x}} |\nabla v| (x)\big|^2 \\
&&\qquad\qquad + \gamma_2 \left( |\nabla v(x)|\right) |\nabla v(x)|^2 \sum_{l=1}^{n-1} k_{l,v}^2 \big]   \, dx =0,
\end{eqnarray*}
which implies that, for any $x\in \left\lbrace \nabla u\neq 0\right\rbrace$,
$$
 \lambda_1(|\nabla u(x)|) |\nabla_{L_{u,x}}|\nabla u|(x)|^2 +\lambda_2(|\nabla u(x)|)|\nabla u(x)|^2 \sum_{l=1}^{n-1} k_{l,u}^2 =0,
$$
and, for any $x\in \left\lbrace \nabla v \neq 0\right\rbrace$,
$$
  \gamma_1 \left( |\nabla v(x)|\right)  \big| \nabla_{L_{v,x}} |\nabla v| (x)\big|^2
     + \gamma_2 \left( |\nabla v(x)|\right) |\nabla v(x)|^2 \sum_{l=1}^{n-1} k_{l,v}^2 =0.
$$

Recalling the definition of $\lambda_1, \lambda_2, \gamma_1, \gamma_2$ given in \eqref{lambda} and \eqref{gamma},
and the assumptions \eqref{ab1} and \eqref{ab2}, the last two equalities imply that
$$
  \nabla_{L_{u,x}}|\nabla u|(x)=0, \qquad k_{1,u}=\ldots=k_{n-1,u}=0,
$$
for any $x\in \left\lbrace \nabla u\neq 0\right\rbrace$, and that
$$
  \nabla_{L_{v,x}}|\nabla v|(x)=0, \qquad k_{1,v}=\ldots=k_{n-1,v}=0,
$$
for any $x\in \left\lbrace \nabla v\neq 0\right\rbrace$.
This means that $u,v$ satisfy \eqref{M3} and \eqref{M4}.
Hence, by Lemma \ref{lemma7} we obtain that there exist~$\overline{u}, \overline{v}:\R\rightarrow\R$
and~$\omega_{u}, \omega_{v}\in S^{n-1}$ in such a way that~
$(u(x), v(x))=(\overline{u}(\omega_{u}\cdot x), \overline{v}(\omega_{v}\cdot x))$, for any~$x\in\R^n$,
which proves the first part of Theorem \ref{T1D}.
\medskip

Now, we assume that condition \eqref{monF12} holds.
Since~$(u,v)$ has a one dimensional symmetry, by summing up \eqref{u} and \eqref{v} we obtain that
\begin{equation}\begin{split}\label{u111}
& \int_{\R^n} F_{12}(u,v) \left| \sqrt{\frac{-v_n}{u_n}} \nabla u(x) + \sqrt{\frac{u_n}{-v_n}} \nabla v(x) \right|^2 \varphi^2 (x) \, dx,
\\  &\qquad   \leq  \int_{\R^n} |\nabla u(x)|^2 \left( A\left( \nabla u(x)\right) \nabla\varphi (x) \right) \cdot \nabla\varphi (x)  \,dx
  \\ & \qquad\qquad + \int_{\R^n} |\nabla v(x)|^2 \left( B\left( \nabla v(x)\right) \nabla\varphi (x) \right) \cdot \nabla\varphi (x)  \, dx.
\end{split}
\end{equation}
Choosing the test function $\varphi$ as in \eqref{fiR} and reasoning as above, we obtain
$$ \int_{\R^n} F_{12}(u,v) \left| \sqrt{\frac{-v_n}{u_n}} \nabla u(x) + \sqrt{\frac{u_n}{-v_n}} \nabla v(x) \right|^2  \, dx =0, $$
which implies that
$$  F_{12}(u,v) \left| \sqrt{\frac{-v_n}{u_n}} \nabla u(x) + \sqrt{\frac{u_n}{-v_n}} \nabla v(x) \right|^2 =0 \qquad \mbox{\ a.e.} $$
Since \eqref{monF12} holds, there exists $x_0\in\Omega'$ such that
$F_{12}(u(x_0),v(x_0))>0$. Therefore,
$$  \sqrt{\frac{-v_n(x_0)}{u_n(x_0)}} \nabla u(x_0) + \sqrt{\frac{u_n(x_0)}{-v_n(x_0)}} \nabla v(x_0) =0, $$
which gives that $\nabla u(x_0)=h(x_0)\nabla v(x_0)$, for some function $h$.
Since we know that~$(u,v)$ has a one dimensional symmetry, this implies that $\omega_{u}=\omega_{v}$.

Finally, we assume that condition \eqref{stabF12} holds. 
Arguing as in the proof of Theorem~$1.8$ in~\cite{DP} 
(see the comments after formula~$(8.5)$), one can prove that 
\begin{equation}\begin{split}\label{qqq}
&\mbox{there\ exists\ a\ non-empty\ open\ set\ $\Omega''\subset\R^2$\ such\ that}\\
&\mbox{$u(x)\in I_u$,\ $v(x)\in I_v$,\ $\nabla u(x)\neq 0$\ and\ $\nabla v(x)\neq 0$\ for\ all\ $x\in\Omega''$}.
\end{split}\end{equation}
Now, reasoning as above, from Theorem \ref{T2} we obtain that
\begin{equation}\begin{split}\nonumber
 &  -2 \int_{\R^n} F_{12}(u,v) \left( |\nabla u(x)| \, |\nabla v(x)| - \nabla u (x) \cdot \nabla v(x) \right) \varphi^2 (x) \, dx\\
 &  \qquad  \leq   \int_{\R^n} |\nabla u(x)|^2 \left( A\left( \nabla u(x)\right) \nabla\varphi (x) \right) \cdot \nabla\varphi (x)\, dx \\
   &\qquad\qquad  +  \int_{\R^n} |\nabla v(x)|^2 \left( B\left( \nabla v(x)\right) \nabla\varphi (x) \right) \cdot \nabla\varphi (x)  \, dx.
  \end{split}
\end{equation}
We choose the test function $\varphi$ as in \eqref{fiR} and we use that fact that $F_{12}(u,v)\leq 0$ to get
$$  F_{12}(u,v) \left( |\nabla u(x)| \, |\nabla v(x)| - \nabla u (x) \cdot \nabla v(x) \right) =0 \qquad \mbox{\ a.e.} $$
By~\eqref{qqq} and~\eqref{stabF12}, there exists $x_1\in\Omega''$ such that
$F_{12}(u(x_1),v(x_1))<0$. Hence
\begin{equation}\begin{split}\nonumber
& |\nabla u(x_1)| \, |\nabla v(x_1)| - \nabla u (x_1) \cdot \nabla v(x_1) \\ &\qquad  =
|\nabla u(x_1)| \, |\nabla v(x_1)| -|\nabla u(x_1)| \, |\nabla v(x_1)| \, \frac{\nabla u(x_1)}{|\nabla u(x_1)|}\cdot \frac{\nabla v(x_1)}{|\nabla v(x_1)|}=0,
\end{split}\end{equation}
which implies that
$$ \frac{\nabla u(x_1)}{|\nabla u(x_1)|}\cdot \frac{\nabla v(x_1)}{|\nabla v(x_1)|}=1. $$
Since we know that~$(u,v)$ has a one dimensional symmetry, this implies that $\omega_{u}=\omega_{v}$.
This concludes the proof of Theorem \ref{T1D}.

\section{An application}\label{sec:appl}
In this section, we use the result stated in Theorem \ref{T1D} to obtain a proof of a conjecture of De Giorgi
for the system \eqref{system} in $\R^2$.

\begin{theorem} \label{Tn2}
Let~$n=2$, and let~$(u, v)$ be a weak solution of~\eqref{system},
with $u\in C^{1}(\R^2)\cap C^2(\left\lbrace \nabla u\neq 0\right\rbrace)$,
$v\in C^1(\R^2)\cap C^2(\left\lbrace \nabla v\neq 0\right\rbrace)$, and
$\nabla u, \nabla v\in L^{\infty}(\R^2)\cap W^{1,2}_{loc}(\R^2)$.

Suppose that either (A1) or (A2) holds, and that either (B1) or (B2) holds.

Assume that either
\begin{equation}\nonumber
\mbox{the monotonicity condition~\eqref{monotonicity} holds, and $F_{12}(u,v)\geq 0$\ in\ $\Im(u,v)$}, \end{equation}
or
\begin{equation}\nonumber
   \mbox{(u,v) is stable, and $F_{12}(u,v)\leq 0$\ in\ $\Im(u,v)$}. \end{equation}
Then~$(u, v)$ has one-dimensional symmetry, in the sense that there exist~$\overline{u}, \overline{v}:\R\rightarrow\R$ and~$\omega_{u}, \omega_{v}\in S^{n-1}$ in such a way that~
$(u(x), v(x))=(\overline{u}(\omega_{u}\cdot x), \overline{v}(\omega_{v}\cdot x))$, for any~$x\in\R^n$.

Moreover, if we assume in addition that either
\begin{equation}\begin{split}\nonumber
&\mbox{the\ monotonicity\ condition\ \eqref{monotonicity}\ holds,\ and\ there\ exists\ a\  non-empty}\\
&\mbox{open\ set\ $\Omega'\subseteq\R^n$\ such\ that\ $F_{12}(u(x),v(x))>0$\ for\ any\ $x\in\Omega'$}  , 
\end{split}\end{equation}
or
\begin{equation}\begin{split}\nonumber
&\mbox{$(u,v)$\ is\ stable,\ and\ there\ exist\ two\ open intervals\ $I_u,I_v\subset\R$}\\
&\mbox{such\ that\ $\left(I_u\times I_v\right)\cap\Im(u,v)\neq\varnothing$\ and\ $F_{12}(\overline u,\overline v)>0$\ for\ any\ $(\overline u,\overline v)\in I_u\times I_v$},
\end{split}\end{equation}
then~$(u, v)$ has one-dimensional symmetry, and $\omega_{u}=\omega_{v}$.
\end{theorem}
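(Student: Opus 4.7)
The plan is to deduce Theorem~\ref{Tn2} as a direct corollary of Theorem~\ref{T1D}. The regularity, monotonicity/stability, and sign-of-$F_{12}$ hypotheses are inherited verbatim, and the dichotomy ``(A1) or (A2)'' (respectively ``(B1) or (B2)'') subsumes the dichotomy required by Theorem~\ref{T1D}, because (A1) enforces $\{\nabla u=0\}=\varnothing$ and (A2) is the other alternative. The only nontrivial step is to verify the integral decay condition~\eqref{EE}, and this is precisely where the $L^\infty$ bound on the gradients and the restriction to $n=2$ enter.

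First I would show that the integrand $|A(\nabla u)|\,|\nabla u|^2 + |B(\nabla v)|\,|\nabla v|^2$ is uniformly bounded on $\R^2$. Pick $M$ with $|\nabla u|,|\nabla v|\le M$ everywhere. By Lemma~\ref{lemma1} the eigenvalues of $A(\xi)$ are $\lambda_1(|\xi|),a(|\xi|),\dots,a(|\xi|)$, so $|A(\xi)|\,|\xi|^2\le t^2\lambda_1(t)+t^2 a(t)$ at $t=|\xi|$. Under (A2), both $a$ and $\lambda_1=(ta)'$ are continuous at zero, hence bounded on $[0,M]$. Under (A1), $t^2\lambda_1(t)$ is locally bounded by assumption, while the positivity condition $\lambda_1=(ta)'>0$ forces $t\mapsto ta(t)$ to be increasing on $(0,\infty)$, so that $t^2 a(t)\le t\cdot M a(M)$ on $(0,M]$. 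The same reasoning applies to $B$.

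Once the integrand is bounded by a constant $C$, computing in polar coordinates (this is where $n=2$ is essential) gives
\begin{equation}\nonumber
\int_{B_R\setminus B_{\sqrt R}}\frac{C}{|x|^2}\,dx \;=\; 2\pi C\int_{\sqrt R}^R \frac{dr}{r} \;=\; \pi C\log R,
\end{equation}
so dividing by $\log^2 R$ yields a term of order $1/\log R$, which vanishes as $R\to+\infty$. Thus \eqref{EE} holds, Theorem~\ref{T1D} applies, and $(u,v)$ has one-dimensional symmetry; the coincidence $\omega_u=\omega_v$ under \eqref{monF12} or \eqref{stabF12} follows immediately from the second part of Theorem~\ref{T1D}, whose hypotheses are identical to those assumed here.

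The main obstacle is really only the uniform bound on $|A(\nabla u)|\,|\nabla u|^2$ under~(A1), where $a(t)$ is allowed to blow up at~$0$; the positivity condition on $\lambda_1$ forces $t\mapsto ta(t)$ to be monotone, which tames the potentially singular behavior. Beyond that, the argument is just a matter of checking hypotheses and performing one explicit integration in dimension two.
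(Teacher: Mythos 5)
Your proposal is correct and follows essentially the same route as the paper: establish a uniform bound on $|A(\nabla u)|\,|\nabla u|^2+|B(\nabla v)|\,|\nabla v|^2$ from the gradient $L^\infty$ bound together with (A1)/(A2) and (B1)/(B2), compute the annular integral in $n=2$ to get $O(\log R)$, and invoke Theorem~\ref{T1D}. You even supply a detail the paper states without argument, namely why $t^2 a(t)$ stays locally bounded under (A1) — the monotonicity of $ta(t)$ forced by $\lambda_1>0$ — which is a genuine and correct addition.
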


\begin{proof}
By the assumptions of Theorem \ref{Tn2}, $|\nabla u|$ and $|\nabla v|$ are taken to be bounded.
Moreover, thanks to either (A1) or (A2) and either (B1) or (B2), the maps
$$
  t \mapsto t^2 \lambda_1(t) +t^2 \lambda_2(t), \qquad t\mapsto t^2 \gamma_1(t) + t^2\gamma_2(t)
$$
belong to $L^{\infty}_{loc}\left([0, +\infty)\right)$.
Therefore, we have that
$$
  |A(\nabla u(x))| \, |\nabla u(x)|^2 + |B(\nabla v(x))|\, |\nabla v(x)|^2 \leq C,
$$
for some positive constant $C$.

Then,
\begin{eqnarray*}
&& \frac1{\log^2 R} \int_{B_R\setminus B_{\sqrt R}} \frac{|A(\nabla u(x))|\, |\nabla u(x)|^2 + |B(\nabla v(x))|\,|\nabla v(x)|^2}{|x|^2}\, dx  \\
&\leq&  \frac{C}{\log^2 R} \int_{\sqrt{R}}^R  \frac{1}{r}\, dr = \frac{C}{\log R}.
\end{eqnarray*}
Therefore, letting~$R\rightarrow +\infty$, we have that the condition~\eqref{EE} is satisfied.
Hence, by Theorem~\ref{T1D}, we obtain the desired result.
\end{proof}

Notice that, as a particular case of \eqref{system}, we can consider the following system, which
arises in phase separation for multiple states Bose-Einstein condensates:
\begin{eqnarray}\label{phase}
\left\{
\begin{array}{ll}
 \Delta u  = uv^2,         \\
  \Delta v = vu^2, \\
  u,v>0.
 \end{array}
\right.
\end{eqnarray}
In fact, in this case, the operators in \eqref{system} reduce to the standard Laplacian and
$F(u,v)=\frac{1}{2}u^2 v^2$.
Under the assumptions of Theorem \ref{Tn2} (notice that  $F_{12}(u,v)=2uv>0$),
one has that the monotone solutions of \eqref{phase} have one-dimensional symmetry.
This result has been proved in \cite{BLWZ}.

\section*{Acknowledgments} The author wants to thank \emph{Enrico Valdinoci}, 
\emph{Andrea Pinamonti} and the anonymous Referee for their useful comments and suggestions.

\medskip
% The data information below will be filled by AIMS editorial staff
 Received July 2012; revised September 2012.
\medskip


\begin{thebibliography}{99}
\bibitem{BLWZ}
\newblock {H. Berestycki, T.-C. Lin, J. Wei and C. Zhao},
\newblock \emph{On phase-separation model: Asymptotics and qualitative properties},
\newblock preprint.

\bibitem{BSWW}
\newblock {H. Berestycki, S. Terracini, K. Wang and J. Wei},
\newblock \emph{Existence and stability of entire solutions of an elliptic system modeling phase separation},
\newblock preprint.

\bibitem{DP}
\newblock {S. Dipierro and A. Pinamonti},
\newblock \emph{A geometric inequality and a symmetry result for elliptic systems involving the fractional Laplacian},
\newblock preprint.

\bibitem{EG} (MR1158660)
\newblock {L. C. Evans and R. F. Gariepy},
\newblock ``Measure Theory and Fine Properties of Functions,"
\newblock Studies in Advanced Mathematics, CRC Press, Boca Raton, FL, 1992.

\bibitem{Fa}
\newblock {A. Farina},
\newblock ``Propri\'{e}t\'{e}s Qualitatives de Solutions d'\'{E}quations et Syst\`{e}mes d'\'{E}quations Non-Lin\'{e}aires,"
\newblock Habilitation \`{a} Diriger des Recherches, Paris VI, 2002.

\bibitem{FSV} (MR2483642)
\newblock {A. Farina, B. Sciunzi and E. Valdinoci},
\newblock \emph{Bernstein and De Giorgi type problems: New results via a geometric approach},
\newblock {Ann. Sc. Norm. Super. Pisa Cl. Sci. (5),} {\bf 7} (2008), 741--791.

\bibitem{FV} (MR2528756) [10.1142/9789812834744_0004]
\newblock {A. Farina and E. Valdinoci},
\newblock \emph{The state of the art for a conjecture of De Giorgi and related problems. Recent progress on reaction-diffusion systems and viscosity solutions},
\newblock in ``Recent Progress on Reaction-Diffusion Systems and Viscosity Solutions," World Sci. Publ., Hackensack, NJ, (2009), 74--96.

\bibitem{FG}
\newblock {M. Fazly and N. Ghoussoub},
\newblock \emph{De Giorgi type results for elliptic systems},
\newblock {Calc. Var. and PDE}.

\bibitem{NTTV} (MR2599456)
\newblock {B. Noris, H. Tavares, S. Terracini and G. Verzini},
\newblock \emph{Uniform H\"{o}lder bounds for nonlinear Schr\"{o}dinger systems with strong competition},
\newblock {Comm. Pure Appl. Math.}, {\bf 63} (2010), 267--302.

\bibitem{LL} (MR1415616)
\newblock {E. H. Lieb and M. Loss},
\newblock ``{Analysis},"
\newblock {Graduate Studies in Mathematics}, \textbf{14}, American Mathematical Society, Providence, RI, 1997.

\bibitem{Se}
\newblock {E. Sernesi},
\newblock ``{Geometria 2},"
\newblock Bollati Boringhieri, Torino, 1994.

\bibitem{SZ1} (MR1620498) [10.1007/s002050050081]
\newblock {P. Sternberg and K. Zumbrun},
\newblock \emph{Connectivity of phase boundaries in strictly convex domains},
\newblock {Arch. Rational Mech. Anal.}, {\bf 141} (1998), 375--400.

\bibitem{SZ2} (MR1650327)
\newblock {P. Sternberg and K. Zumbrun},
\newblock \emph{A Poincar\'{e} inequality with applications to volume-constrained area-minimizing surfaces},
\newblock {J. Reine Angew. Math.}, {\bf 503} (1998), 63--85.

\bibitem{Wa}
\newblock {K. Wang},
\newblock \emph{On the De Giorgi type conjecture for an elliptic system modeling phase separation},
\newblock preprint.



\end{thebibliography}
\end{document}